\newcommand{\kris}[1]{{\textcolor{red}{#1}}}
\newcommand{\arthur}[1]{{\textcolor{blue}{#1}}}
\newcommand{\jojo}[1]{{\textcolor{green}{#1}}}
\newcommand{\Z}{{\mathbb Z}}
\newcommand{\PP}{{\mathbb P}}
\newcommand{\R}{{\mathbb R}}
\newcommand{\CC}{{\mathcal C}}
\newcommand{\affFan}{\Sigma}
\newcommand{\projFan}{\PP\Sigma}
\newcommand{\Facets}{\text{Facets}}
\newcommand{\EEE}{{\mathcal E}}
\newcommand{\TTT}{{\mathcal T}}
\newcommand{\CCC}{{\mathcal C}}
\newcommand{\FFF}{{\mathcal F}}
\newcommand{\LLL}{{\mathcal L}}
\newcommand{\MMM}{{\mathcal M}}
\DeclareMathOperator{\cl}{cl}
\newcommand{\E}{{\mathcal E}}
\newcommand{\F}{{\mathcal F}}
\newcommand{\M}{{\mathcal M}}
\renewcommand{\M}{{\mathcal M}}
\renewcommand{\L}{{\mathcal L}}
\newcommand{\Aff}{\text{Aff}}
\DeclareMathOperator{\rk}{r}
\newtheorem{thm}{Theorem}[section] 
\newtheorem{lemma}[thm]{Lemma}     
\newtheorem{proposition}[thm]{Proposition}
\tikzset{%
  add/.style args={#1 and #2}{to path={%
 ($(\tikztostart)!-#1!(\tikztotarget)$)--($(\tikztotarget)!-#2!(\tikztostart)$)%
  \tikztonodes}}
} 
\newcommand{\comment}[1]{}
\title[Real phase structures on matroid fans]{Real phase structures on matroid fans and matroid orientations}
\author{Johannes Rau, Arthur Renaudineau and Kris Shaw}
\begin{document}
\maketitle

\begin{abstract}
We introduce the notion of real phase structure on rational polyhedral fans in Euclidean space. Such a structure consists of an assignment of affine spaces over $\Z/2\Z$ to each top dimensional face of the fan subject to two conditions. 

Given an oriented matroid we can construct a real phase structure on the fan of the underlying matroid. Conversely, we show 
that from a real phase structure on a matroid fan we can produce an orientation of the underlying matroid.
Thus real phase structures 
are cryptomorphic to matroid orientations. 

The topes of the orientated matroid are recovered immediately from the real phase structure. We also provide a direct way to recover the signed circuits of the oriented matroid from the real phase structure. 

\end{abstract}

\tableofcontents{}

\section{Introduction}

We propose a definition of real phase structures on rational polyhedral fans, with specific attention to matroid fans, 
also known as Bergman fans of matroids. 
A real phase structure on a fan in $\R^n$ is the specification of an affine subspace of $(\Z/2\Z)^n$ for each top-dimensional cone
of the fan and subject to two conditions, see Definition \ref{def:realstructure}. 
Given an oriented matroid $\MMM$, its underlying non-oriented matroid is denoted by $\underline{\MMM}$. For a fixed matroid $M$, 
an oriented matroid $\MMM$ such that $M = \underline{\MMM}$ is called an orientation of $M$. 
Our main theorem states that a  real phase structure  on the associated matroid fan $\Sigma_M$ 
is equivalent to an orientation of the underlying matroid $M$.

\begin{thm}\label{thm:orientedphase} 
Given a fixed matroid $M$, there is a natural bijection between orientations $\MMM$ of $M$ and real phase structures on the matroid fan $\Sigma_M$.
In other words, oriented matroids and real phase structures on matroid fans are cryptomorphic concepts.
\end{thm}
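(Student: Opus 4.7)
The plan is to construct explicit maps in both directions between orientations $\MMM$ of $M$ and real phase structures on $\Sigma_M$, and then show they are mutually inverse. The starting point is that the top-dimensional cones of $\Sigma_M$ correspond bijectively to maximal flags of flats $\mathcal{F}\colon \emptyset = F_0 \subsetneq F_1 \subsetneq \cdots \subsetneq F_r = E$ of $M$. Given an orientation $\MMM$, for each such maximal flag I would assign to the corresponding top cone a natural affine subspace of $(\Z/2\Z)^E$ built from the signed cocircuit data of $\MMM$ adapted to $\mathcal{F}$: the fundamental cocircuits with respect to a basis transverse to $\mathcal{F}$, reduced modulo $2$ and combined as an appropriate affine translate, furnish the natural candidate. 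The two conditions of Definition~\ref{def:realstructure} should then follow from the cocircuit axioms by a direct combinatorial check.

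Conversely, from a real phase structure $\mathcal{E}$ on $\Sigma_M$ I would reconstruct an orientation of $M$ by reading off the topes as the vectors in the affine subspaces attached to top cones, and by recovering the signed circuits from how these affine subspaces compare across adjacent top cones. When two top cones share a codimension-one face, their attached affine subspaces are expected to be cosets of one another, and the coset representative should encode a signed circuit supported on the rank-two flat indexing the shared face.

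The main obstacle will be verifying that the data produced in the reverse direction genuinely defines an oriented matroid---especially signed circuit elimination. My tactic would be to localize: around every codimension-two cone of $\Sigma_M$, the star is itself the Bergman fan of a rank-$2$ matroid, so the restriction of the real phase structure there should yield a rank-$2$ oriented matroid, where the axioms reduce to a small finite combinatorial check. Patching these local rank-$2$ orientations consistently via the global compatibility conditions built into the real phase structure should then yield a well-defined global orientation. Once both maps are validated, mutual inverseness should follow by unwinding definitions, since both constructions are characterized by the same local data on flags and their codimension-one adjacencies.
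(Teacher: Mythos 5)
Your forward map is essentially the paper's (the paper assigns to $\sigma_{\FFF}$ the set of $\varepsilon$ with $(-1)^{\varepsilon}$ a tope adjacent to $\FFF$, Definition~\ref{def:OrientedToRealPhase}, and verifies the two axioms via Lemma~\ref{lemma:numberoftopes} and the diamond property of the covector lattice), and injectivity is exactly as you say: the topes are read off from $\bigcup_\sigma \EEE(\sigma)$ and determine the oriented matroid. The genuine gap is in the reverse direction, which is the entire content of the theorem. Your tactic is to extract local rank-$2$ oriented matroids from the stars of codimension-two cones and then ``patch'' them into a global orientation using ``the global compatibility conditions built into the real phase structure.'' But the real phase structure axioms are codimension-\emph{one} conditions (an even-covering/necklace condition around each codimension-one face), and there is no identified mechanism by which the rank-$2$ local data forces circuit elimination for arbitrary circuits. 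Indeed, whether an oriented matroid is even \emph{determined} (up to reorientation) by its rank-$2$ minors is posed as an open question in this very paper (Question~\ref{ques:necklace}); a local-to-global argument from rank-$2$ stars is precisely what the authors do not know how to carry out, and your proposal gives no argument for the patching step. A secondary inaccuracy: the affine spaces attached to two facets sharing a codimension-one face are not cosets of one another (their tangent spaces differ); they form a necklace arrangement, and the signed circuit data is instead recovered from topes adjacent to $\cl(C\setminus\{i,j\})$ as in Proposition~\ref{prop:realstructureToSignature}.

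The paper's actual route around this obstacle is a double induction on rank and corank. Given $\EEE$ on $\Sigma_M$ and a rank-one flat $F$, the induced minors $\EEE/F$ and $\EEE\backslash F$ are represented by oriented matroids $\MMM_1,\MMM_2$ by induction; Lemma~\ref{lem:subfans} and Proposition~\ref{prop:quotientsubfan} show $\MMM_1$ is an oriented matroid \emph{quotient} of $\MMM_2$ of corank one; the Richter--Gebert--Ziegler solution of the factorization problem in corank one (Lemma~\ref{lem:1ext}) then produces an orientation $\MMM$ of $M$ with the prescribed deletion and contraction; and Lemma~\ref{lem:RealStructureHiggs} shows a real phase structure is determined, up to reorientation, by its deletion and contraction along $F$, so $\EEE=\EEE_{\MMM}$ after reorienting. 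If you want to complete your proof, you would need either to supply the missing global patching argument (which would in particular resolve Question~\ref{ques:necklace}) or to replace it with an inductive scheme of this kind that imports the corank-one lifting theorem for oriented matroids.
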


Real phase structures have previously been defined on tropical curves  \cite[Section 7.2]{GrishaICM}, \cite{Bertrand},  \cite{BBR}, and on non-singular tropical hypersurfaces \cite{Renaudineau17} 
and \cite{RS}. These two special cases of real phase structures on polyhedral complexes have been used in the study of real enumerative geometry, in the form of Welschinger invariants, and the topology of real algebraic varieties.
Studying the fans of matroids  has led to major breakthroughs in understanding the behaviour of many matroid invariants \cite{AdiprasitoHuhKatz}, \cite{ArdilaDenhamHuh}.  Our hope is that matroid fans equipped with real phase structures will  find similar applications in the study of oriented matroids.

The initial goal of our investigation into real phase structures on matroid fans was to generalise the spectral sequence from \cite{RS} and subsequent bounds obtained  on Betti numbers of real algebraic hypersurfaces arising from Viro's patchworking procedure  \cite{Viro6and7}, \cite{ViroPatch} to more general spaces. This is the subject of a forthcoming paper, in which we define the real part of a tropical variety equipped with a real phase structure. This is similar to the patchworking of tropical linear spaces of Celaya, Loho and Yuen \cite{CLY}, yet we do not require the underlying matroid of the oriented matroid to be uniform.

We construct the bijection from Theorem \ref{thm:orientedphase} explicitly by 
assigning to each oriented matroid $\MMM$ with $\underline{\MMM} = M$ a real phase structure on $\Sigma_M$. 
The real part of this real phase structure produces a topological representation of the 
oriented matroid in the sense of the famous theorem of Folkman and Lawrence \cite{FolkmanLawrence}. 
This recovers similar constructions by Ardila, Klivans, and Williams  \cite{AKW} and Celaya \cite{Celaya} 
for the positive real part and real part of a matroid fan. 
In the forthcoming paper,  we combine this fact with Theorem \ref{thm:orientedphase} 
to prove that the real part of a non-singular tropical variety is a PL-manifold.

The  paper is organised as follows. 
In Section \ref{sec:fans}, we recall the definition of rational polyhedral fans and introduce the notion of real phase structures on them. Then we specialise to the case of fans arising from matroids. In Subsection \ref{subsecMatroids}, we recall the construction of Ardila and Klivans of a matroid fan from its lattice of flats. In Subsection \ref{sec:necklace}, we introduce the notion of necklace line arrangements and translate one of the conditions for real phase structures into this language in the case of matroid fans.
We show that real phase structures on matroid fans behave well under the operations of deletion and contraction of the underlying matroid in Subsection \ref{subsec:deleteandcontract}.

Section \ref{sec:Orientedmatrealphase} contains our main results. First, in subsection \ref{subsec: orientedmatroids} we show how to obtain a real phase structure from an oriented matroid. 
Then in  Subsections \ref{subsec:orientedmatroidquotients} and \ref{subsec:equiv}, we prove that every real phase structure on a matroid fan is obtained in this fashion by considering oriented matroid quotients. Finally, for convenience,  in Subsection \ref{RealStructureToSignature} we give an explicit description of how to recover the signed circuits of the oriented matroid from the real phase structure on a matroid fan. 

\section*{Acknowledgement}
We thank Marcel Celaya, Georg Loho, Felipe Rincón, and Chi Ho Yuen for enlightening conversations. We are also grateful to Jerónimo Valencia and two anonymous referees for helpful comments on preliminary versions of this paper.

\section{Real phase structures}
\subsection{Fans and real phase structures}\label{sec:fans}
A polyhedral fan $\Sigma$ in $\R^n$ is a collection of convex polyhedral cones such that every face of a cone in $\Sigma$ is also a cone in $\Sigma$, and the intersection of two cones is a face of both of them. A fan is rational if all of its cones are generated over $\Z$.
The faces of $\Sigma$ which are maximal with respect to inclusion are called the facets of $\Sigma$. We denote the set of facets of $\Sigma$ by $\Facets(\Sigma)$. 
A rational polyhedral fan is pure dimensional if its facets are all of  equal dimension. Here we only consider fans of pure dimension.

Throughout $V$ will be a vector space over $\Z_2 := \Z / 2\Z$. We  let $\Aff_d(V)$ denote the set of all affine subspaces of dimension $d$ in $V$. 
Given $A \in \Aff_d(V)$, we denote by $T(A)$ the \emph{tangent space} of $A$. In particular, 
the space $T(A)$ is 
the $d$-dimensional linear subspace in $V$ generated by the vectors $x-y$ for  $x,y \in A$. 
If $\sigma$ is a rational polyhedral cone in $\R^n$, we denote its tangent space by  $T(\sigma)$. The space $T(\sigma)$ is the linear span over $\R$ of the vectors generating the cone. The set of integer points in  the  tangent space of $\sigma$ is  denoted by $T_{\Z}(\sigma)$, and the reduction  mod $2$ of these points is denoted by $T_{\Z_2}(\sigma)$.

\begin{definition}\label{def:evencovering} 
A collection of subsets  of a set such that every element in the union is contained in an even number of the subsets is called an \emph{even covering}. 
\end{definition}

\begin{definition}\label{def:realstructure}
Let $\Sigma$ be a rational polyhedral fan of pure dimension $d$ in $\R^n$.
A \emph{real phase structure} $\mathcal{E}$ on $\Sigma$ is a map
$$ \mathcal{E} \colon \Facets(\Sigma) \to \Aff_d(\Z_2^n)$$
such that 
\begin{enumerate}
\item for every facet $\sigma$ of $\Sigma$, the set $ \mathcal{E}(\sigma) $ is an affine subspace of $\Z_2^n$ parallel to $\sigma$, in formulas, $T(\EEE(\sigma)) = T_{\Z_2}(\sigma)$; 
\item \label{cond2} for every codimension one face $\tau$ of $\Sigma$ with facets $\sigma_1, \dots , \sigma_k$ adjacent to it, the sets $\E(\sigma_1), \dots, \E(\sigma_k)$ are an even covering. 
\end{enumerate}
\end{definition}

\begin{definition}\label{def:reorientation}
Let $\EEE$ be a  real phase structure on $\Sigma$. A \emph{reorientation of $\EEE$} is a real phase structure
 $\EEE'$ obtained by translating all affine subspaces in a real phase structure  $\EEE$ by a fixed vector $\varepsilon \in \Z_2^n$. 
 In other words $\EEE'(\sigma) = \EEE(\sigma) + \varepsilon$ for all $\sigma \in \Sigma$. 
\end{definition}

\subsection{Matroid fans} \label{subsecMatroids}

A matroid $M$ is a finite set $E$ together with a function $\rk  : 2^E \to \mathbb{N}_{\geq0}$, where $2^E$ denotes the power set of $E$. 
The set $E$ is called the ground set of $M$ and $r$ the rank function. The rank function is subject to the axioms: 
\begin{enumerate}
\item $0 \leq \rk(A) \leq |A|$ for all $A \subseteq E$
\item if $A \subseteq B \subseteq E$, then $\rk(A) \leq \rk(B)$
\item if $A, B \subseteq E$,  then $\rk(A \cup B) +  \rk(A \cap B)  \leq \rk(A ) + \rk(B) $.  
\end{enumerate}

The rank function defines a closure operator on subsets by $$\cl(A) = \{ i \in E \ | \  \rk(A) = \rk(A \cup i) \} \supseteq A.$$ A subset $F \subseteq E$ is a flat of $M$ if it is closed with
respect to this operator, namely $\cl(F) = F$. The flats of a matroid $M$ ordered by inclusion form a lattice, known as the lattice of flats, which we denote by $\mathcal{L}$. 

A loop is an element of the ground set for which $\rk(i) = 0$. Parallel elements are pairs of non-loop elements $i, j \in E$ for which $\rk(ij) = 1$. A matroid is simple if it contains no loops or parallel elements.
A circuit is any set $A\subset E$ such that $|A|=\rk(A)+1$ and 
$|A| = \rk(A\setminus i)$ for any $i\in A$.
A coloop is an element that does not belong to any circuit.

Given a loopfree matroid $M$ on the base set $E$, we denote by $\Sigma_M$ the \emph{affine} matroid fan in
$\R^E$ and by $\projFan_M = \Sigma_M/\langle (1,\dots,1) \rangle$ the \emph{projective} matroid fan in $\R^E / \langle (1,\dots,1) \rangle $. 
We now describe how to construct both of these fans following Ardila and Klivans \cite{ArdilaKlivans}. 
Note that Ardila and Klivans use the terminology Bergman fan of a matroid $M$.
Fix the vectors $v_i = -e_i$ where $\{e_1, \dots, e_n\}$ is the standard basis of $\R^E$ for $E = \{1, \dots, n\}$ and set $v_I = \sum_{i \in I} v_i$ for any subset $I \subset E$.
For a 
chain of flats $$\F =\{ \emptyset \subsetneq F_1 \subsetneq F_2 \subsetneq \dots \subsetneq F_k \subsetneq E\}$$
in the lattice of flats $\L$,
define the $(k+1)$-dimensional cone 
\[\sigma_{\F} = \langle v_{F_1}, \dots , v_{F_k},  v_E , - v_E\rangle_{\geq 0}.\]
The affine  matroid fan $\Sigma_M$ is the collection of all such cones ranging over the chains in $\L$. 
In particular, the top dimensional faces of  $\Sigma_M$ are in one to one correspondence with 
the maximal chains in the lattice of flats of $M$. 
The projective matroid fan $\projFan_M$ is the image of $\Sigma_M$ in the quotient $\R^E / (1, \dots, 1)$. 

If a matroid $M$ has loops $L = \cl(\emptyset)$, then we set $\Sigma_M := \Sigma_{M/L} \subset \R^{E \setminus L}$ and 
$\projFan_M := \projFan_{M/L} \subset \R^{E \setminus L}/(1,\dots,1)$. 
This is a practical definition, the more coherent point of view is to regard 
the affine and projective matroid fans as subsets of  boundary strata of tropical affine space  and tropical projective space, respectively. See \cite{ShawIntMat} or \cite{MikRau} for more details.

\begin{figure}
\includegraphics[scale=0.25]{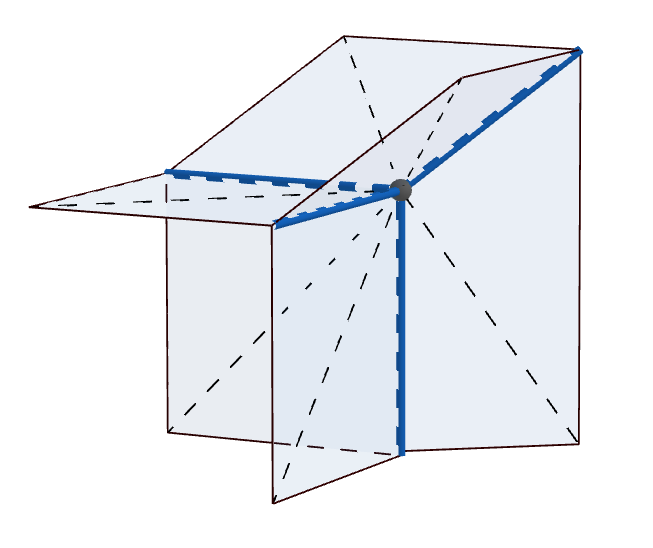}
\caption{The projective fan of the matroid $U_{3,4}$ drawn in $\R^4/(1,\dots, 1)$ as described in Example \ref{ex:uniform}.}
\label{fig:fanU34}
\end{figure}

\begin{example}\label{ex:uniform}
The uniform matroid of rank $k+1$ on $n$ elements will be denoted $U_{k+1,n}$.
The flats of the  matroid  $M = U_{k+1,n}$ are all subsets of $\{1, \dots, n\}$ of size less than or equal to $k$ and $\{1, \dots, n\}$. 
Therefore, the faces of top dimension of $\Sigma_M$ are in bijection with ordered subsets of size $k$. For example, the ordered set $\{i_1 <i_2 < \dots < i_k\}$ 
corresponds to a chain of flats $$\F = \{\emptyset \subsetneq  F_1 \subsetneq  \dots \subsetneq  F_k\subsetneq  E\}$$ where $F_1 = \{ i_1\} $ and 
$F_{j+1} = F_{j} \cup \{i_{j+1}\}$, 
which  in turn corresponds to the cone $$\sigma_{\mathcal{F}} = \langle v_{F_1}, \dots ,  v_{F_k} ,v_{E}, -v_E \rangle_{\geq 0 }.$$
The projective fan for $U_{3, 4}$ is shown in Figure \ref{fig:fanU34}. The dotted rays in this fan correspond to the $6$ rank $2$ flats.

There  are  coarser fan structures on the support of the fan $\Sigma_{M}$. For example, the fan with maximal cones $\sigma_I =  \langle v_{i_1} , \dots, v_{i_k},  v_{E}, -v_E \rangle$ where $I = \{i_1, \dots, i_k\}$ ranges over all subsets $I$ of $E$ of size $k$,  has the same support as $\Sigma_M$. This in known as the coarse matroid  fan of $M$ \cite{ArdilaKlivans}. 

\end{example}

\subsection{Necklace arrangements}\label{sec:necklace}

We now give an equivalent reformulation of Condition (\ref{cond2}) in Definition \ref{def:realstructure} in the case of matroid fans. 
Notice first that there is a  one  to one correspondence between real phase structures
on $\Sigma_M$ and $\projFan_M$ induced by the projection
$\Z_2^E \to \Z_2^E / (1,\dots,1)$.

Suppose $\EEE$ is a real phase structure on a $d$-dimensional rational polyhedral fan $\Sigma$ and let $\tau$  be a codimension $1$ face of $\Sigma$. Then the affine subspaces $\EEE(\sigma_i)$ where $\sigma_i$ are the facets adjacent to $\tau$ all contain the direction of the $(d-1)$-dimensional
linear space $T_{\Z_2}(\tau)$. The even covering property at the codimension one face $\tau$ can be equivalently checked on the lines in $\Z_2^n / T_{\Z_2}(\tau)$ obtained as projections of the $\EEE(\sigma_i)$'s.

Given an arrangement of lines $L_1, \dots, L_k \in  \Aff_1(V)$, its \emph{intersection complex} is the simplicial 
complex that consists of a vertex for every line and a simplex on the vertices $i_1, \dots, i_q$ for every 
point in $L_{i_1} \cap \dots \cap L_{i_q}$. 

\begin{definition} \label{def:necklace}
An arrangement of lines $L_1, \dots, L_k \in  \Aff_1(V)$ is a \emph{necklace of lines}  if its intersection complex is a cycle graph.
An arrangement of subspaces $E_1, \dots , E_k \in \Aff_d(V)$ whose tangent spaces 
share a $d-1$-dimensional linear space $W$ is called a 
\emph{necklace arrangement} if the projection to $V/W$ yields lines $L_1, \dots, L_k$ forming a necklace of lines. 
\end{definition}

\begin{remark} \label{remspecialcase}
  We will use this definition exclusively for vector spaces over $\Z_2$. 
	Under this assumption, two lines $L_1, L_2$ form a necklace if and only if $L_1 = L_2$.
	If a  necklace arrangement consists of more than two lines, then these lines must be  pairwise distinct.
	
	For subspace arrangements of higher dimension, note  that the definition of necklace arrangement is independent 
	of the choice of $W$. Indeed, if this choice is not unique, then 
	the affine spaces are all parallel and hence form a necklace if and only if 
	$k=2$ and $E_1 = E_2$. 
\end{remark}

\begin{figure}
\includegraphics[scale=0.3]{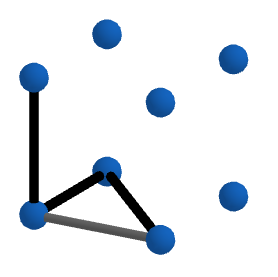} 
\hspace{0.5cm}
\includegraphics[scale=0.3]{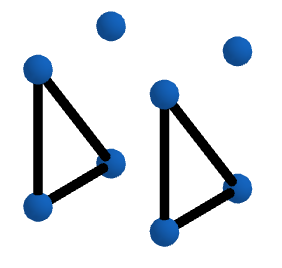} 
\hspace{0.5cm}
\includegraphics[scale=0.3]{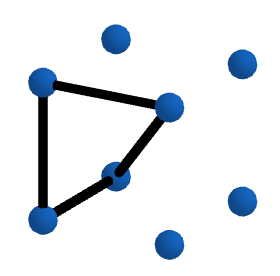} 
\caption{Three line arrangements in $\Z_2^3$ from Example \ref{ex:necklaceArrangements}.}
\label{fig:necklaceArrangements}
\end{figure}

\begin{example}\label{ex:necklaceArrangements}
Figure \ref{fig:necklaceArrangements} shows three different line arrangements in $\Z_2^3$. The points in $\Z_2^3$ are represented as vertices of a cube. Lines in $\Z_2^n$ are in correspondence with pairs of  points. So in the figure a line is represented by an edge joining two points. The first line arrangement consists of $4$ lines, and the $4$ lines considered as subsets of $\Z_2^3$ do not form an even cover in the sense of Definition \ref{def:evencovering}. In the second example, there are $6$ lines, which form an even cover but do not form a necklace arrangement. The third example is collection of $4$ lines forming  a necklace arrangement. 
\end{example}

We  can establish the following alternative for condition (2) in  Definition \ref{def:realstructure} in the case of matroid fans. 

\begin{enumerate}
	\item[(2')] For every codimension one face $\tau$ of $\Sigma$ with facets $\sigma_1, \dots , \sigma_k$ adjacent to it, the subspaces 
	$\E(\sigma_1), \dots, \E(\sigma_k)$ form a necklace arrangement.
\end{enumerate}

\begin{lemma} \label{lem:2'equiv}
  Let $ \mathcal{E} \colon \text{Facets}(\Sigma) \to \Aff_d(\Z_2^n)$ be a map satisfying
	condition (1) from Definition \ref{def:realstructure}.
	If $\E$ satisfies condition (2'), then it also satisfies condition (2). 
	Moreover, if $\Sigma = \Sigma_M$ (or $\Sigma = \projFan_M$) is an affine (or projective) matroid fan, then the two conditions are equivalent.
\end{lemma}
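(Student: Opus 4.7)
The statement splits naturally into two directions. The easy direction $(2') \Rightarrow (2)$ holds without any matroid hypothesis: since each $\E(\sigma_j)$ is a union of cosets of $T_{\Z_2}(\tau)$, condition (2) is equivalent to an even cover condition on the projected lines $L_j$ in the quotient $Q := \Z_2^n/T_{\Z_2}(\tau)$, and the necklace condition is by Definition \ref{def:necklace} also intrinsic to $Q$. Over $\Z_2$ each $L_j$ has exactly two points, and in a necklace each such point is shared with precisely one neighbouring line (including the degenerate case $L_1 = L_2$ for $k = 2$ recorded in Remark \ref{remspecialcase}); hence every covered point lies on exactly two lines.

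For the reverse direction, assume $\Sigma = \Sigma_M$ and condition (2). A codimension-one face $\tau$ of $\Sigma_M$ corresponds to a maximal chain of flats with a single rank-two gap between flats $F_{i-1}$ and $F_{i+1}$, and the facets $\sigma_1, \dots, \sigma_k$ adjacent to $\tau$ correspond to the intermediate flats filling this gap. By the standard interval/minor correspondence in the lattice of flats these intermediate flats are exactly $G_j = F_{i-1} \cup P_j$, where $P_1, \dots, P_k$ are the parallel classes of the loopfree rank-two minor $M|F_{i+1}/F_{i-1}$ and thus partition $F_{i+1} \setminus F_{i-1}$ (with $k \geq 2$). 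Since $\chi_{F_{i-1}} \in T_{\Z_2}(\tau)$, the projected line $L_j$ in $Q$ has direction $\bar\chi_{P_j}$.

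The crux of the proof would be the following independence sublemma: \emph{the only $\Z_2$-linear relation among $\bar\chi_{P_1}, \dots, \bar\chi_{P_k}$ in $Q$ is $\sum_j \bar\chi_{P_j} = 0$}. I would prove it by writing an arbitrary relation $\sum_{j \in S} \chi_{P_j} = \sum_{s \neq i} a_s \chi_{F_s} + b\,\chi_E$ in $\Z_2^n$ and comparing coordinates stratum by stratum: the strata $E \setminus F_{r-1}, F_{r-1} \setminus F_{r-2}, \dots, F_{i+2} \setminus F_{i+1}$ (where the left-hand side vanishes) force $b = 0$ and $a_s = 0$ for all $s > i$ in turn, and then any coordinate in $F_{i+1} \setminus F_{i-1}$ (where each element belongs to a unique $P_j$) gives $[j \in S] = a_{i+1}$ independently of $j$, forcing $S \in \{\emptyset, \{1, \dots, k\}\}$.

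Granting the sublemma, I finish as follows. If $k = 2$, the sublemma gives $\bar\chi_{P_1} = \bar\chi_{P_2}$, so $L_1, L_2$ are parallel; distinct parallel lines over $\Z_2$ are disjoint, so the even cover forces $L_1 = L_2$, which is the necklace condition. If $k \geq 3$, the sublemma makes the directions $\bar\chi_{P_j}$ pairwise distinct, so the lines $L_j$ are pairwise distinct two-element subsets of $Q$ and form the edge set of a simple graph $G$ on the vertex set $\bigcup_j L_j$. Condition (2) says every vertex of $G$ has even degree, so by Veblen's theorem $G$ decomposes into edge-disjoint simple cycles. Summing direction vectors around any such cycle yields a nonempty $S \subseteq \{1, \dots, k\}$ with $\sum_{j \in S} \bar\chi_{P_j} = 0$; by the sublemma $S = \{1, \dots, k\}$, so the decomposition consists of a single $k$-cycle on all $k$ edges, which is precisely the intersection complex required by $(2')$. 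The main obstacle in this plan is establishing the independence sublemma; once it is in hand, the combinatorial finish via Veblen's theorem is routine.
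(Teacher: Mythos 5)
Your proof is correct and takes essentially the same route as the paper's: both reduce condition (2) versus (2') to the single key fact that the only $\Z_2$-linear relation among the projected direction vectors of the lines is that their full sum vanishes, which both arguments derive from the partition of $F_{s}\setminus F_{s-1}$ by the intermediate flats (the covering axiom). You merely make explicit the combinatorial finish (the degenerate $k=2$ case and the cycle decomposition via Veblen's theorem) that the paper leaves implicit when it identifies cycles in the intersection complex with linear relations.
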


\begin{proof}
Suppose  Condition (2') is satisfied, then at each codimension one face $\tau $ of $\Sigma$ the intersection complex of the subspaces $\E(\sigma_1), \dots, \E(\sigma_k)$ is a necklace arrangement. 
In particular, every element in $\E(\sigma_1) \cup \dots \cup \E(\sigma_k)$  is contained in exactly two of the affine spaces. This implies the first statement. 

	Let us now assume $\Sigma = \Sigma_M$ for some matroid $M$. The statement for projective fans is equivalent. 
	Assume that $\EEE$ satisfies condition (2).   
	For a face $\tau$ of codimension $1$ of $\Sigma_M$ and a facet $\sigma_i$ adjacent to $\tau$, let 
	$v_i \in \Z^n/ T_{\Z}(\tau)$ be a non-zero  
	integer vector generating the image of $T_{\Z}(\sigma_i)$ under the quotient by $T_{\Z}(\tau)$.  Let $\overline{v}_i \in \Z_2^n/ T_{\Z_2}(\tau)$ denote the mod $2$ reduction 
	of $v_i$. 
	Then the directions of the  lines in the necklace line arrangement in    $\Z_2^n / T_{\Z_2}(\tau)$ are $\overline{v}_i$.
	Any cycle in the intersection complex corresponds to a non-trivial linear relation among the vectors $\overline{v}_i$  
        in $\Z_2^n / T_{\Z_2}(\tau)$. 
	In the case of matroid fans, there is a unique, up to scalar,  linear relation among the $\overline{v}_i$'s, namely their sum is zero.
	To see this, note that the chain of flats $\FFF = \{\emptyset \subsetneq F_1\subsetneq \dots \subsetneq F_k \subsetneq E\} $ associated to $\tau$ contains exactly one gap. Namely, we have $\rk(F_s) = \rk(F_{s-1}) +2$ for a unique $s$ and $\rk(F_j) = \rk(F_{j-1}) +1$, otherwise. 
	Denote by $H_1, \dots , H_k$ all the flats such that $F_{s-1} \subsetneq H_i \subsetneq F_{s}$, $1 \leq i \leq k$. 
	By the covering axiom for flats, the sets $H_i \setminus F_{s-1}$ form a partition of $F_{s} \setminus F_{s-1}$.
	Hence 
	\[
	  \sum_{i=1}^l v_{H_i} = v_{F_s} - (k-1) v_{F_{s-1}}
	\]
	as in \cite[Remark 4.4 b)]{Rau-TropicalLefschetzHopf}, which implies the claim on the $\overline{v}_i$'s.
	%
	%
\end{proof}

\begin{definition}\label{def:necklaceOrder} 
Given a finite set $S$, a \emph{necklace ordering} of $S$
is an equivalence class of two cyclic orderings of $S$, which are related by reversing the order.
For example, a cycle graph defines a necklace ordering of its vertices. 
\end{definition}

\begin{remark}\label{rem:necklaceordering}
A real phase structure on a matroid fan $\Sigma_M$ determines at every codimension one face  of $\Sigma_M$ a necklace ordering of the facets of $\Sigma_M$ adjacent to the codimension one face.
The necklace ordering is defined by the cycle graph of the necklace line arrangement at each codimension one face.  A reorientation of a real phase structure in the sense of Definition \ref{def:reorientation} induces the same necklace ordering of facets adjacent to codimension one faces as the original real phase structure. 
\end{remark}

\begin{ques}\label{ques:necklace}
Is a real phase structure on a matroid fan determined, up to reorientation, by the induced necklace ordering 
of the facets at each codimension one face of the fan?
\end{ques}

In general, we may ask for a description of the set of real phase 
structures which produce a fixed collection of necklace orderings at codimension one faces.
Using the correspondence between real phase structures on matroid fans and orientations of matroids which we will prove here, we can translate the question to one about oriented matroids:
Up to reorientations, is an oriented matroid determined by its rank $2$ minors?

\begin{example}\label{ex:line}
Consider the projective fan of the uniform matroid $M = U_{2, n}$. The fan $\projFan_M \subset \R^{n-1}$  has $n$ edges generated by the images of the vectors $v_1 = -e_1, \dots, v_{n} = -e_{n}$ in $\R^{n} / \langle (1,\cdots,1) \rangle$. Denote by $\rho_i$ the image of the vector $v_i$. Note that $\sum \rho_i=0$. 
Choosing a real phase structure on $\projFan_M$ amounts to choosing the following ingredients:
\begin{enumerate}
\item A necklace  ordering of the $n$ edges corresponding to $ \rho_{i_1}, \dots, \rho_{i_{n}}$;  
\item A point $p \in \Z_2^{n-1}\simeq \Z_2^{n} / \langle (1,\cdots,1) \rangle$ that serves as the  intersection point  of $\E(\rho_{i_1})$ and $\E(\rho_{i_2})$. 
\end{enumerate} 
From this information, a collection of affine lines $\E(\rho_{i})$ satisfying the conditions of Definition \ref{def:realstructure} can be uniquely recovered. For example, the choice of point $p$ determines both $\EEE(\rho_{i_1})$ and $\EEE(\rho_{i_2})$, since their tangent spaces are fixed. By the necklace arrangement property, the point $p + \sum_{k = 2}^{j-1} v_{i_k}$ is in the affine line  $\EEE(\rho_{i_j})$ for $j \geq 3$, where the vector sum is considered mod $2$. 
This determines all of  the affine lines $\EEE(\rho_{i_j})$.

Figure \ref{fig:reallineU34} shows the fan $\projFan_M \subset \R^4 / \langle (1, 1, 1, 1) \rangle \cong \R^3$ for $M = U_{2,4}$ together with an assignment of affine spaces along its edges that  determine a real phase structure. The induced necklace ordering of the facets  is $\sigma_2,   \sigma_3, \sigma_1, \sigma_4$.
From Figure \ref{fig:reallineU34}, we see that the point $p$ is contained in the intersection $\EEE(\sigma_2) \cap \EEE(\sigma_3)$. 
 If we set $p = (0, 0, 0, 0) \in  \Z_2^4 / \langle (1, 1, 1, 1) \rangle$, then the corresponding necklace of lines is the last of the three arrangements in  $\Z_2^3 \cong \Z_2^4 / \langle (1, 1, 1, 1) \rangle$ depicted in Figure \ref{fig:necklaceArrangements}. 
\end{example}

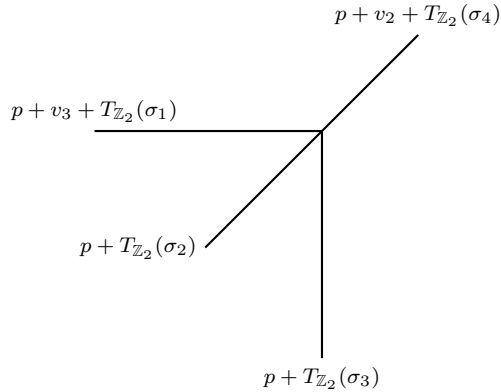
\begin{figure}
\begin{tikzpicture}
\coordinate (O) at (0,0,0);
    \draw[thick, -] (O) --++ (-3,0,0)   node[above] {$  p + v_3 + T_{\Z_2} (\sigma_1)$ };
    \draw[thick, -] (O) --++ (0,-3,0) node[below] {$p  + T_{\Z_2} (\sigma_3) $};
    \draw[thick, -] (O) --++ (0,0,4)  node[below, left] {$ p  + T_{\Z_2} (\sigma_2)$ };
    \draw[thick, -] (O) --++ (0.5,0.5,-2) node[above] {$ p  + v_2 + T_{\Z_2} (\sigma_4) $};
\end{tikzpicture}
\caption{The projective fan of the matroid $U_{2, 4}$ from Example \ref{ex:line} drawn in $\R^4/\langle (1, \dots, 1) \rangle$ with the labelling of the faces indicating the assignment of an affine space $\EEE(\sigma_i)$ parallel to 
$T_{\Z_2}(\sigma_i)$. 
}\label{fig:reallineU34}
\end{figure}

\begin{example}\label{ex:hyperplane}
For the matroid $M = U_{n-1, n}$  we will show that there is a unique real phase structure on $\projFan_M$ up to reorientation. Such
real phase structures were considered in \cite{RS}. 
  
 By \cite[Lemma 3.14]{RS}, a real phase structure $\EEE$ on $\projFan_M$ 
  satisfies $$| \bigcup_{\sigma} \EEE(\sigma) | = 2^{n-1} - 1.$$
    Therefore there is exactly one element $\varepsilon$ in the 
    complement $\Z_2^{n-1} \backslash \cup_{\sigma} \EEE(\sigma)$. Up to reorientation we can suppose that $\varepsilon = (0, \dots, 0)$.
   
   Since  $\projFan_M$ is of codimension one in $\R^{n-1}$, for each facet $\sigma$ of $\projFan_M$  there is a choice of exactly two affine subspaces of $\Z_2^{n-1}$ which are parallel to the reduction of the span of $\sigma$ in $\Z_2^{n-1}$.   One of these spaces is an honest vector subspace and hence contains $(0, \dots, 0)$. Therefore, if we are to associate to each $\sigma$ an affine subspace $\E(\sigma)$ and wish to avoid that it contains $(0, \dots, 0)$, then the choice of affine space at each top dimensional face is determined.  This demonstrates that there is at most one real phase structure on $\projFan_M$,  up to reorientation in the sense of Definition \ref{def:reorientation}.
\end{example}

\subsection{Deletion and contraction of real phase structures} \label{subsec:deleteandcontract}

We briefly recall the notion of minors of a matroid. Let $M$ be a matroid with ground set $E$ and rank function $\rk$. For $S \subset E$, then the 
\emph{deletion} of $S$ is the matroid 
$M\backslash S$ with ground set   $E \setminus S$ and rank function $\rk_{M\backslash S}(A)= \rk_{M}(A)$. The \emph{contraction} of $M$ by $S$ is the matroid  $M/S$ whose ground set is again $E\setminus S$ and  rank function $\rk_{M/S}(A)= \rk_M(A\cup S)-\rk_M(A)$. 
Lastly, the  \emph{restriction} of $M$ to $S$ is the matroid $M|_S$ whose ground set is $S$ and rank function $\rk_{M|_S}$ is the restriction of $\rk_M$. Notice that $M|_S = M \backslash S^c$, where $S^c = E \setminus S$. 
A \emph{minor} of a matroid $M$ is any matroid obtained from $M$ by a sequence of deletions and contractions. A deletion or contraction is called \emph{elementary} when the set $A$ is a singleton.

For any subset $A \subset E$, we denote by $p_A \colon \R^E \to \R^{E \setminus A}$ the projection 
which forgets the coordinates $x_i$ for all $ i \in A$. If the matroid $M$ has loops $L \subseteq E$, we use the same notation
for the projection $p_A \colon \R^{E\setminus L} \to \R^{E \setminus (L\cup A)}$. 
We also use the shorthand $p_i$ in the case $A = \{i\}$. 

If $i$ is a loop or coloop of $M$, then $M\backslash i = M/i$, so deletion and contraction are equivalent. The  support of $\Sigma_{M \backslash i}$ is the image of the projection of  the matroid fan $\Sigma_M$ under the projection $p_i$.
Note that this is also true if $i$ is a loop, in which case, according to our conventions, $\Sigma_{M \backslash i} = \Sigma_M$ and $p_i = \text{id}$. 
If $i$ is not a coloop, then the facets  of $\Sigma_{M \backslash i} $ are the projections of facets of $\Sigma_M$ whose dimensions are preserved under $p_i$.

Suppose that $i$ is not a loop of $M$.
Note that by our convention regarding loops, we have $\Sigma_{M / i} = \Sigma_{M/ \cl(i)}$. 
The support of the matroid fan of  $M/ i$ is the set
$\{ x \in \R^{E \setminus \cl(i)} \ | \ |p^{-1}_{\cl(i)}(x) | > 1 \}$.
The facets of $\Sigma_{M /i}$ are the images of facets of $\Sigma_M$ whose dimensions are \emph{not} preserved under the projection by $p_{\cl(i)}$. 
More details on the geometry of $\Sigma_M$,  $\Sigma_{M\backslash i}$, and $\Sigma_{M/i}$ and their relations under $p_i$ 
can be found in \cite[Section 2]{ShawIntMat} and also \cite[Section 3]{FR-DiagonalTropicalMatroid}.

A real phase structure on the fan of a matroid  $M$ induces canonical real phase structures on the fans of all  minors of $M$. 
We will describe the induced real phase structures for elementary deletions and contractions of a  matroid $M$.
The geometric idea is very simple: given a facet $\sigma$ of the matroid fan of a minor, 
we pick a facet $\tilde{\sigma}$ of $\Sigma_M$ that projects to $\sigma$. Then the affine space associated to 
$\sigma$ is the projection of $\EEE(\tilde{\sigma})$.
Given that we work with the fine subdivision of $\Sigma_M$ induced by the lattice of flats, the choice of $\tilde{\sigma}$ 
is in general not unique. To simplify the proofs in the following sections, we make a specific choice for $\tilde{\sigma}$.
However, Definition \ref{InducedRealStructures} is  independent of this choice,  as discussed after the definition.
For $\sigma$ a facet of $\Sigma_{M \backslash i}$, let $p^*_i(\sigma)$ be the facet of $\Sigma_M$ which is obtained by taking the closure in $M$ of all the flats of $M \backslash i$  occurring
in the chain of flats describing $\sigma$ when $i$ is not a coloop of $M$. 
If $i$ is a coloop of $M$, prolongate the chain by one piece by adding $i$ everywhere. 
Note that $p_i(p^*_i(\sigma)) = \sigma$. Moreover, if $i$ is not a 
coloop then  $p^*_i(\sigma)$ is the unique facet that projects to $\sigma$. For $\sigma $ a facet in $\Sigma_{M/ i}$ corresponding to  the chain of flats 
\[\emptyset = \subsetneq F_1 \subsetneq \dots \subsetneq F_k \subsetneq E \setminus \cl(i),\]
set  $p_{\cl(i)}^{\diamond} (\sigma)$ to be the facet
of $\affFan_M$ given by the chain 
\[\cl(\emptyset) \subseteq \cl(i) \subsetneq F_1 \sqcup \cl(i) \subsetneq \dots \subsetneq F_l \sqcup \cl(i) \subsetneq E.\]
Note that $p_{\cl(i)}(p_{\cl(i)}^{\diamond} (\sigma)) = \sigma$. By abuse of notation, we use the same letter $p_A$ 
for the reduction mod $2$ counterpart $p_A \colon \Z_2^E \to \Z_2^{E \setminus A}$.

\begin{definition} \label{InducedRealStructures}
	Let $\EEE$ be a real phase structure for the matroid fan $\Sigma_M$ and choose $i \in E$. 
	The \emph{deletion} $\EEE \backslash i$
	 is the real phase structure on  $\Sigma_{M \backslash i}$ 
	given by 
	\[
	  (\EEE \backslash i) (\sigma) = p_i (\mathcal{E} (p_i^* (\sigma)))
	\]
	for any facet $\sigma$ of $\Sigma_{M \backslash i}$. 
	The \emph{contraction} $\EEE / i$ is the real phase structure on  $\Sigma_{M / i}$ 
	given by 
	\[
	  (\EEE / i) (\sigma) = p_{\cl(i)} (\mathcal{E} (p_{\cl(i)}^{\diamond} (\sigma)))
	\]
	for any facet $\sigma$ of $\Sigma_{M / i}$.
\end{definition} 

As previously mentioned, for $\sigma$ a facet of either $\Sigma_{M\backslash i} $ or $\Sigma_{M/ i}$, we are free to  replace $p_i^* (\sigma)$ or $p_{\cl(i)}^{\diamond} (\sigma)$ in the above definition with any facet of $\Sigma_M$ which 
projects onto $\sigma$ under $p_i$ or $p_{\cl(i)}$, respectively. Any such facet will be contained in the same facet of the coarsest subdivision of the support of  $\Sigma_M$ as the facets  $p_i^* (\sigma)$ or $p_{\cl(i)}^{\diamond} (\sigma)$, respectively. Therefore,  Conditions (1) and (2) of a real phase structure imply that $\E$ must assign the same affine space to any such choice of face. 

\begin{proposition}\label{realstructure_contraction_deletion}
The maps $\E \backslash i$ and $\E/ i$ from  	Definition \ref{InducedRealStructures} define  real phase structures on 
$\Sigma_{M \backslash i }$ and $\Sigma_{M / i}$ respectively.
\end{proposition}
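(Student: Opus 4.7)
The plan is to verify directly the two axioms of Definition~\ref{def:realstructure} for both $\E\backslash i$ and $\E/i$, using Lemma~\ref{lem:2'equiv} to replace axiom~(2) by the necklace arrangement condition~(2'). We would treat deletion and contraction in parallel, the only difference being whether the relevant projection---$p_i$ for deletion, $p_{\cl(i)}$ for contraction---is dimension-preserving or dimension-collapsing on the facets of $\Sigma_M$ selected by Definition~\ref{InducedRealStructures}. Since $M\backslash i = M/i$ when $i$ is a loop or coloop, we may assume $i$ is neither when treating deletion and is not a loop when treating contraction.

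To verify condition~(1), we would invoke the standard matroid-fan geometry recalled in Subsection~\ref{subsec:deleteandcontract} (see \cite{ShawIntMat}): the restrictions $p_i|_{p_i^*(\sigma)}\colon p_i^*(\sigma)\to\sigma$ and $p_{\cl(i)}|_{p_{\cl(i)}^\diamond(\sigma)}\colon p_{\cl(i)}^\diamond(\sigma)\to\sigma$ are linear bijections for every facet $\sigma$ of the respective minor fan. Reducing modulo $2$ they yield isomorphisms $T_{\Z_2}(p_i^*(\sigma))\xrightarrow{\sim} T_{\Z_2}(\sigma)$ and $T_{\Z_2}(p_{\cl(i)}^\diamond(\sigma))\xrightarrow{\sim} T_{\Z_2}(\sigma)$. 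Since $\E(p_i^*(\sigma))$ is affine and parallel to $T_{\Z_2}(p_i^*(\sigma))$ by axiom~(1) for $\E$, its image $(\E\backslash i)(\sigma)$ is an affine subspace parallel to $T_{\Z_2}(\sigma)$; the case of $\E/i$ is identical.

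For condition~(2'), we would fix a codim-1 face $\tau$ of the minor fan with adjacent facets $\sigma_1,\dots,\sigma_k$. The chain of flats describing $\tau$ contains a unique rank-2 jump. Taking closures in $M$ (for deletion) or prepending $\cl(i)$ to every flat (for contraction) produces a chain of flats in $M$ with the same rank jumps, and thereby a codim-1 face $\tilde\tau$ of $\Sigma_M$. The correspondence between flats of $M$ and those of its minors, combined with the covering axiom used in the proof of Lemma~\ref{lem:2'equiv}, would provide a bijection between the intermediate flats filling the rank-2 jump in the minor and those filling it in $M$; this descends to a bijection $\sigma_j \leftrightarrow \tilde\sigma_j$ on adjacent facets with $p_i(\tilde\sigma_j)=\sigma_j$ (resp.\ $p_{\cl(i)}(\tilde\sigma_j)=\sigma_j$). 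By hypothesis $\E(\tilde\sigma_1),\dots,\E(\tilde\sigma_k)$ is a necklace arrangement at $\tilde\tau$; since the projection is an isomorphism on each $T_{\Z_2}(\tilde\sigma_j)$ and on $T_{\Z_2}(\tilde\tau)$, the projected family would be a necklace arrangement at $\tau$, giving~(2').

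The main obstacle will be the combinatorial bookkeeping to establish the bijection of intermediate flats, which splits into cases according to whether elements of $\{i\}$ (resp.\ $\cl(i)$) lie in the flats bracketing the rank-2 jump. One way to bypass this case analysis would be to pass to the coarsest subdivision of the support of $\Sigma_M$, which by the independence observation following Definition~\ref{InducedRealStructures} yields the same affine-space assignments to the minor facets and makes the face correspondence geometrically transparent.
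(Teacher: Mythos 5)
Your verification of condition (1) is essentially fine (though note that for contraction the restriction of $p_{\cl(i)}$ to $p_{\cl(i)}^{\diamond}(\sigma)$ is \emph{not} a bijection --- it has a one-dimensional kernel containing $v_{\cl(i)}$, since $\dim p_{\cl(i)}^{\diamond}(\sigma) = \dim\sigma + 1$; the conclusion still holds because the image of an affine space under a linear map is affine and its tangent space is $p(T_{\Z_2}(\tilde\sigma)) = T_{\Z_2}(\sigma)$).

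The genuine gap is in your treatment of condition (2) for deletion. There you claim a bijection between the facets $\sigma_1,\dots,\sigma_k$ of $\Sigma_{M\backslash i}$ adjacent to $\tau$ and the facets of $\Sigma_M$ adjacent to $\tilde\tau = p_i^*(\tau)$. No such bijection exists in general: when $i$ is not a coloop, $\dim\tilde\tau = \dim\tau$, and among the facets of $\Sigma_M$ adjacent to $\tilde\tau$ there may be one whose dimension is \emph{collapsed} by $p_i$; such a facet does not correspond to any facet of $\Sigma_{M\backslash i}$ adjacent to $\tau$. Concretely, for $M=U_{3,5}$, $i=5$ and $\tau$ given by $\emptyset\subsetneq\{1\}\subsetneq\{1,2,3,4\}$, the face $\tilde\tau$ has four adjacent facets (via the flats $\{1,2\},\{1,3\},\{1,4\},\{1,5\}$) while $\tau$ has only three; the facet via $\{1,5\}$ is collapsed since $v_{\{1,5\}}-v_{\{1\}}=v_5\in\ker p_i$ (this is exactly the situation of Example~\ref{ex:realModification}). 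So the projected family of affine spaces around $\tau$ consists of the images of only a \emph{subset} $J$ of the spaces in the necklace around $\tilde\tau$, and one must separately argue that discarding the collapsed spaces does not destroy the covering property. The paper does this by working directly with the even-covering condition (2) rather than (2'): for each point $\varepsilon$ it counts pairs $(\tilde\varepsilon,j)$ with $\tilde\varepsilon\in p^{-1}(\varepsilon)\cap\EEE(\tilde\sigma_j)$, observes that this fiber has one element when $\tilde\sigma_j$ is non-collapsed and an even number when it is collapsed, and concludes by parity from condition (2) for $\EEE$. Your proposed fallback --- passing to the coarsest subdivision --- does not help, since the collapsing is a feature of the projection of supports, not of the chosen fan structure; and your preference for condition (2') over (2) makes the collapsed case strictly harder, since one would additionally have to show that deleting a bead from a necklace (whose image becomes the intersection point of its two neighbours) again yields a necklace.
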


\begin{proof}
  We must show that the maps $\E \backslash i$ and $\E /i$ satisfy Conditions (1) and (2) of Definition \ref{def:realstructure}.
	We set $p = p_i$ or $p = p_{\cl(i)}$ depending on whether we consider the deletion or the contraction. 
	Let $\sigma$ be a facet of either $\Sigma_{M\backslash i} $ or $\Sigma_{M/i}$. 
	Let  $\tilde{\sigma} = p^{*} (\sigma)$ or $\tilde{\sigma} = p^\diamond(\sigma)$. 
	The projection of an affine space along $p$ remains an affine space. 
	Moreover, since we have $\sigma = p(\tilde{\sigma})$ and hence also
	 $T( \sigma) = p (T ( \tilde\sigma ) )$, Condition (1) holds for both $\E \backslash i$ and $\E / i$.
	 
	Let us now check Condition (2). Let $\tau$ be a codimension one face of 
	either $\Sigma_{M\backslash i} $ or $\Sigma_{M/i}$
	and set $\tilde{\tau} = p^{*} (\tau)$ or $\tilde{\tau} = p^\diamond(\tau)$, respectively. 
	If $\dim \tilde{\tau} = \dim \tau +1$, then the arrangements of
	affine subspaces around $\tau$ and $\tilde{\tau}$ agree after quotienting by
	$T_{\Z_2}(\tau)$ and $T_{\Z_2}(\tilde{\tau})$, respectively. 
	
Now let us assume that $\dim \tilde{\tau} = \dim \tau$. 
	Let $\tilde{\sigma}_1, \dots, \tilde{\sigma}_k$ denote the facets of $\affFan_M$ adjacent to $\tilde{\tau}$.
	Let $J  = \{ j \ |  \ 1 \leq j \leq k , \dim \tilde{\sigma}_{j} = \dim p(\tilde{\sigma}_{j}) \}$.
	Then the arrangement of affine subspaces around $\tau$ in the induced real phase structure consists of the affine spaces 
	     $p(\EEE(\tilde{\sigma}_{j}))$ for $j \in J$. 
	Given a point $\varepsilon \in  p(\EEE(\tilde{\sigma}_{j_0}))$ for some $j_0 \in J$, first notice that	
		\begin{align*} 
		|\{j \in J \colon  \varepsilon \in p(\EEE(\tilde{\sigma}_{j} ))\} |
		&   = |\{ (\tilde{\varepsilon}, j) \ | \ \tilde{\varepsilon} \in p^{-1}(\varepsilon) \cap \EEE(\tilde{\sigma}_j) \text{ and } j\in J\} |  \\
			& \equiv    |\{ (\tilde{\varepsilon}, i) \ | \  \tilde{\varepsilon} \in p^{-1}(\varepsilon) \cap \EEE(\tilde{\sigma}_i), 1 \leq i \leq k\}| \mod 2.
	\end{align*}
	The equality and congruence follow from the fact that 
	$|\{\EEE(\tilde{\sigma}_i) \cap p^{-1}(\varepsilon)\}|= 1$ if $\dim \tilde{\sigma}_i = \dim p(\tilde{\sigma}_i)$ and $|\{\EEE(\tilde{\sigma}_i) \cap p^{-1}(\varepsilon)\}|$ is  even otherwise. 
	The last expression is a sum of even numbers by the fact that $\E$ is a real phase structure on $\Sigma_M$. Hence, any point $\varepsilon \in  p(\EEE(\tilde{\sigma}_{j_0}))$ for any $j_0 \in J$ 
	is covered an even number of  times by the affine spaces around $\tau$ which proves condition (2). 
\end{proof}

  Note that we can iterate the operations of deletion and contraction to construct general \emph{minors}
	$\EEE \backslash A / B$. To justify the notation, we need to show that the result is 
	invariant under reordering the sequence of deletions and contractions.

	\begin{proposition} 
			Let $\EEE$ be a real phase structure for the matroid fan $\Sigma_M$ of the matroid $M$ and choose $i \neq j  \in E$. 
			Then 
			\begin{align*} 
				\EEE \backslash i \backslash j &= \EEE \backslash j \backslash i, \\
				\EEE \backslash i / j &= \EEE / j \backslash i, \\
				\EEE / i / j &= \EEE / j / i.
			\end{align*}
	\end{proposition}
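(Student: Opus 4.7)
My plan is to prove each of the three commutativity identities by unfolding Definition \ref{InducedRealStructures} and then iterating the independence-of-lift remark that follows it. The key observation is that, in each identity, the composite projection forgets the same coordinate subset regardless of the order of operations. For $\EEE \backslash i \backslash j$ versus $\EEE \backslash j \backslash i$, this is immediate from $p_i \circ p_j = p_j \circ p_i = p_{\{i,j\}}$. For the mixed identity, it follows from $\{i\} \cup \cl_{M \backslash i}(j) = \{i\} \cup \cl_M(j)$, which holds because $\cl_{M\backslash i}(j) = \cl_M(j) \cap (E \setminus \{i\})$. For the double-contraction identity, it follows from the classical matroid identity $\cl_M(i) \cup \cl_{M/i}(j) = \cl_M(\{i,j\}) = \cl_M(j) \cup \cl_{M/j}(i)$.

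To illustrate the argument, I would first prove $\EEE \backslash i \backslash j = \EEE \backslash j \backslash i$. Fix a facet $\sigma$ of $\Sigma_{M\backslash \{i,j\}}$ and choose any facet $\tilde{\sigma}$ of $\Sigma_M$ such that $p_{\{i,j\}}(\tilde{\sigma}) = \sigma$ and such that $\hat{\sigma} := p_i(\tilde{\sigma})$ is a facet of $\Sigma_{M\backslash i}$. Proposition \ref{realstructure_contraction_deletion} guarantees that $\EEE \backslash i$ is a real phase structure on $\Sigma_{M\backslash i}$, so the independence remark following Definition \ref{InducedRealStructures} gives $(\EEE \backslash i \backslash j)(\sigma) = p_j\bigl((\EEE \backslash i)(\hat{\sigma})\bigr)$, while applying the same remark to $\EEE$ itself yields $(\EEE \backslash i)(\hat{\sigma}) = p_i(\EEE(\tilde{\sigma}))$. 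Composing, $(\EEE \backslash i \backslash j)(\sigma) = p_{\{i,j\}}(\EEE(\tilde{\sigma}))$. Swapping the roles of $i$ and $j$ produces $(\EEE \backslash j \backslash i)(\sigma) = p_{\{i,j\}}(\EEE(\tilde{\sigma}))$ for the same $\tilde{\sigma}$, which proves equality. The mixed and double-contraction cases then proceed identically after invoking the appropriate closure identity above; in both cases Proposition \ref{realstructure_contraction_deletion} provides the intermediate real phase structure on which to apply the independence remark a second time.

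The main obstacle I anticipate is the dimensional bookkeeping required to guarantee that the intermediate face $\hat{\sigma}$ is a genuine top-dimensional facet of the intermediate matroid fan, rather than a lower-dimensional face. This requires a sub-case analysis depending on whether $i$ is a loop, a coloop, or neither of $M$, and, in the contraction cases, whether $j$ becomes a loop in the intermediate minor. I would handle these sub-cases by selecting $\tilde{\sigma}$ concretely via the assignments $p_i^*$ or $p_{\cl(i)}^\diamond$ and tracking dimensions through the flat-chain description of matroid fans; the paper's conventions identifying $\Sigma_{M\backslash i} = \Sigma_M$ when $i$ is a loop, and prolonging the chain by one step for a coloop in the $\diamond$-lift, already absorb the degenerate situations. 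Once dimensions have been accounted for, the independence argument closes all three identities uniformly.
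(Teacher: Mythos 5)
Your argument for $\EEE\backslash i\backslash j=\EEE\backslash j\backslash i$ and for the mixed identity is sound and matches the paper's proof: in those two cases the distinguished lifts literally commute ($p_i^*\circ p_j^*=p_j^*\circ p_i^*$ and $p_i^*\circ p_{\cl(j)}^\diamond=p_{\cl(j)}^\diamond\circ p_i^*$, once the coloop cases are set aside), so a single facet of $\Sigma_M$ computes both sides and the composite projection is the same. The gap is in the double-contraction identity, where you assert the argument "proceeds identically." It does not. Assume $\rk(i,j)=2$ (otherwise the two sides trivially coincide). To compute $(\EEE/i/j)(\sigma)$ you must lift through a facet of $\Sigma_{M/i}$, which forces the lift in $\Sigma_M$ to be the facet $\sigma_1$ whose chain begins $\cl(i)\subsetneq\cl(i,j)\subsetneq F_1\subsetneq\cdots$; the other order forces the facet $\sigma_2$ whose chain begins $\cl(j)\subsetneq\cl(i,j)\subsetneq F_1\subsetneq\cdots$. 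No single facet of $\Sigma_M$ is a valid lift for both orders: a facet that projects \emph{onto} a facet of $\Sigma_{M/i}$ must have $\cl(i)$ as its first flat, while one that projects onto a facet of $\Sigma_{M/j}$ must have $\cl(j)$ as its first flat, and these are incompatible. Moreover $\sigma_1$ and $\sigma_2$ lie in different cones of the coarse subdivision (e.g.\ $\langle v_1, v_{12}, v_E\rangle$ versus $\langle v_2, v_{12}, v_E\rangle$ in $U_{3,4}$), so the independence-of-lift remark gives you no relation between $\EEE(\sigma_1)$ and $\EEE(\sigma_2)$; indeed these affine spaces are in general distinct, and only their images under $p_{\cl(i,j)}$ agree.

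Establishing that $p_{\cl(i,j)}(\EEE(\sigma_1))=p_{\cl(i,j)}(\EEE(\sigma_2))$ is the actual content of the third identity, and your proposal contains no mechanism for it. The paper's argument is: $\sigma_1$ and $\sigma_2$ are both adjacent to the codimension-one face $\tau$ with chain $\cl(i,j)\subsetneq F_1\subsetneq\cdots\subsetneq F_k\subsetneq E$; since $p_{\cl(i,j)}(T(\sigma))=p_{\cl(i,j)}(T(\tau))$ for every facet $\sigma$ adjacent to $\tau$, any two affine spaces of the necklace arrangement around $\tau$ that intersect have equal images under $p_{\cl(i,j)}$, and the necklace condition (2') then propagates this equality around the whole cycle, so \emph{all} facets adjacent to $\tau$ project to the same affine space. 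You would need to add an argument of this kind (or some substitute using condition (2)) to close the third case; the closure identity $\cl_M(i)\cup\cl_{M/i}(j)=\cl_M(i,j)$ only tells you the two composite projections agree, not that they agree on the two different inputs they are fed.
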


	\begin{proof}
		We start with the first two equalities. 
		Note that by definition $\EEE \backslash i = \EEE / i$ if $i$ is a coloop. 
		Hence we may assume that not both $i$ and $j$ are coloops in the first equation
		and $i$ not a coloop in the second equation, the exceptions being covered 
		by the third equation. 
		Under these assumptions, the first two equations hold since $p_i^* \circ p_j^* = p_j^* \circ p_i^*$
		and $p_i^* \circ p_j^\diamond = p_j^\diamond \circ p_i^*$.	
		
		For the last equality, 
	notice that for any $i, j$ the projections $p_{\cl(i)}$ and $p_{\cl(j)}$ commute and the composition is $p_{\cl(i,j)}$.
		We are asked to compare the projections under $p_{\cl(i,j)}$ of  the affine spaces $\E(\sigma_1), \E(\sigma_2)$ 
		where $\sigma_1, \sigma_2$ are two faces of $\affFan_M$ 
		associated to the chains of flats
		\begin{align} 
			\cl(\emptyset) \subseteq \cl(i) \subseteq \cl(i,j) \subsetneq F_1 \subsetneq \dots \subsetneq F_k \subsetneq E, \\
			\cl(\emptyset) \subseteq \cl(j) \subseteq \cl(i,j) \subsetneq F_1 \subsetneq \dots \subsetneq F_k \subsetneq E.
		\end{align}
		We may assume $\rk(i,j) = 2$, since otherwise $\sigma_1 = \sigma_2$. Then $\sigma_1$ and $\sigma_2$ are adjacent to
		the codimension one face $\tau$ associated to 
		\[
			\cl(\emptyset) \subsetneq \cl(i,j) \subsetneq F_1 \subsetneq \dots \subsetneq F_k \subsetneq E.
		\]
		Note that $p_{\cl(i,j)}(T(\sigma)) = p_{\cl(i,j)}(T(\tau))$ for every facet $\sigma$ adjacent to $\tau$. 
		In particular, if two affine subspaces in the necklace arrangement intersect, their projections under 
		$p_{\cl(i,j)}$ agree. 
		Therefore the necklace condition (2') implies that $p_{\cl(i,j)}(\EEE(\sigma))$ is the same affine space for every $\sigma$ adjacent to $\tau$.  
		In particular, we have $p_{\cl(i,j)}(\EEE(\sigma_1)) = p_{\cl(i,j)}(\EEE(\sigma_2))$. This  proves the claim.	
	\end{proof}

\section{Matroid orientations and real phase structures} \label{sec:Orientedmatrealphase} 

\subsection{From oriented matroids to real phase structures}\label{subsec: orientedmatroids}
Here we will produce a real phase structure on a matroid fan from an oriented matroid.
We will use the covector description of oriented matroids. 
For an oriented matroid  $\M$, on ground set $E$, the covectors of $\M$ are 
a subset $\CC \subseteq  \{0, + 1 , -1 \}^{E}$.  Let $X\in\CC$. For $i\in E$, the $i$-th coordinate of $X$ is denoted by $X_i$. The positive and negative parts of $X$ are respectively 
$$
X^{+}:=\left\lbrace i\in E \mid X_i=+1 \right\rbrace,
$$
and 
$$
X^{-}:=\left\lbrace i\in E \mid X_i=-1 \right\rbrace.
$$
 The support of $X$ is 
 $$
 \text{Supp}(X):=\left\lbrace i\in E \mid X_i\neq 0\right\rbrace.
 $$
The composition operation $\circ$ on covectors $X$ and $Y$ is defined by 
$$(X \circ Y)_i = 
\begin{cases} 
X_i \text{ \ \  if } X_i \neq 0\\
 Y_i \text{ \ \ \  if } X_i = 0 .
 \end{cases}
$$
 The separation set $S(X,Y)$ is defined by 
 $$
 S(X,Y):=\left\lbrace i\in E \mid X_i=-Y_i \neq 0 \right\rbrace.
 $$
The covectors of an oriented matroid  satisfy the following axioms: 
\begin{enumerate}
\item  $0 \in \CC$
\item $X \in \CC$ if and only if $-X \in \CC$
\item $X, Y \in \CC$ implies that $X \circ Y \in \CC$
\item If $X, Y \in \CC$ and $i \in S(X, Y) $ then there exists a $Z \in \CC$ such that $Z_i = 0$ 
and $Z_j = (X \circ Y)_j = (Y \circ X)_j$ for all $j \not \in S(X, Y)$. 
\end{enumerate}

The set of covectors $\CC$ forms a lattice under the partial order  $0 < +1, -1$ considered coordinatewise. 
There is a forgetful map  $\phi$ from oriented matroids to matroids which preserves rank and the size of the ground set. 
Given an oriented matroid $\M$, we let $\underline{ \M} = \phi(\M) $ denote its underlying matroid. 
We can describe the forgetful map on the level of the covector lattice $\CC$ of $\M$ and the lattice of flats  $\L$ of $M$. 
Given a covector $X \in \mathcal{C}$ the forgetful map assigns 
$\phi(X) = \text{Supp}(X)^c \in \L$ where $A^c$ denotes $E \setminus A$. 
The image of a covector of the oriented matroid under the forgetful map is a flat of the underlying matroid \cite[Proposition 4.1.13]{bjorner}. 

The set of \emph{topes} $\mathcal{T}$ are the maximal covectors with respect to the partial order on $\mathcal{C}$. If the underlying matroid of 
$\M$ has no loops we have 
$\mathcal{T} \subseteq \{+1, -1\}^{|E|}$.
Let $\M$ be an oriented matroid with collection of topes $\TTT$ and underlying lattice of flats $\LLL$. For  $F \in \LLL$ and $T \in \TTT$, we denote by $T \setminus F \in \{0,+1,-1\}^E$ the vector 
obtained by setting all coordinates
in $F$ to $0$. We say $F$ is \emph{adjacent to} $T$ if $T \setminus F \in \CCC$.
More generally, given a flag $\mathcal{F} := F_0 \subsetneq F_1 \subsetneq  F_2 \subsetneq \dots \subsetneq F_k$ of flats in $\mathcal{L}$,
we define the set of topes adjacent to $\FFF$ by 
$$\mathcal{T}(\mathcal{F}) = \{ T \in \mathcal{T} \ | \ T \setminus F_i \in \CCC \text{ for all } i = 0,\dots,k \}.$$

\begin{example} \label{RealHyperplanes}
 A set $H_1,\dots, H_n$ of hyperplanes in $\R^r$ defined by linear forms $l_1,\dots,l_n$ produces an oriented matroid on $\left\lbrace 1,\dots, n \right\rbrace$. The underlying matroid on $\left\lbrace 1,\dots, n \right\rbrace $ is given by the rank function $\rk(A)=\text{codim} (\cap_{i\in A} H_i)$.  
A covector corresponds to a cell of the decomposition of $\R^r$ induced by the positive regions $H_i^+=\left\lbrace l_i(x)\geq 0 \right\rbrace$ and negative regions $H_i^-=\left\lbrace l_i(x)\leq 0 \right\rbrace$. Assuming that none of the linear forms are identically equal to zero, the topes are in bijection with the cells in the complement of the arrangement.
The flat associated to a covector is in bijection with the set of hyperplanes containing the corresponding cell.
Figure \ref{fig:projectivearrangement}, shows the intersection of an arrangement of four planes in $\R^3$ with a sphere.  The underlying matroid of this arrangement is  the uniform matroid $U_{3,4}$. 
 There are $14$ cells of dimension two in the subdivision of the sphere induced by the intersections of the four planes. These are the topes of the oriented matroid. Each cell of the complement is labelled by a tuple $\{+, -\}^4$ corresponding to the sign of the linear forms $l_1, \dots, l_4$ evaluated at a point in the open cell.  
\begin{figure}
\centering
\includegraphics[scale=0.25]{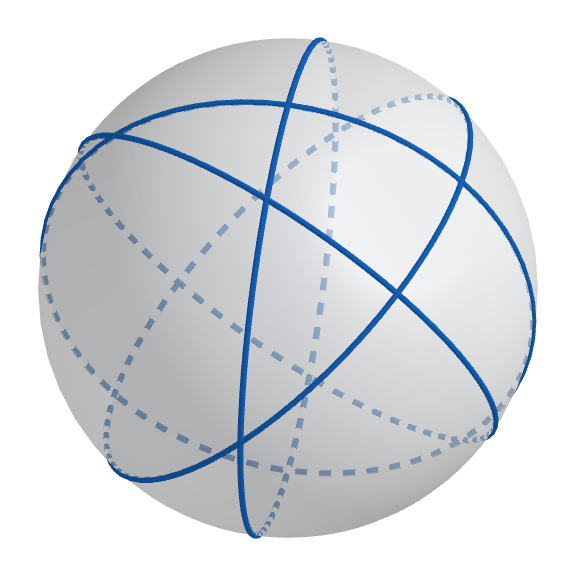}
\put(-95, 50){$l_1$}
\put(-60, 90){$l_2$}
\put(-55, 50){$l_3$}
\put(-95, 125){$l_4$}
\caption{The intersection of a real arrangement of $4$ generic planes in $\R^3$ with the unit sphere. Assigning the covector $(+, +, +, +)$ to the region bounded by the spherical triangle  facing the viewer formed by $l_1, l_2, l_3$ determines the covectors of all regions.  }
\label{fig:projectivearrangement}
\end{figure}
\end{example}

\begin{lemma}\label{lemma:numberoftopes}
Let $\M$ be a loopfree oriented matroid of rank $d$ and set $M = \underline{ \M}$.  
For any flag $\F = \{\cl(\emptyset) = F_0 \subsetneq F_{1} \subsetneq \dots \subsetneq F_{k} \subsetneq F_{k+1} = E\}$ of $M$ we have 
$$|{\mathcal{T}}({\F})| =  |\chi_{M_{\F}}(-1)|,$$
where $\chi_{M_{\F}}(t)$ denotes the characteristic polynomial of the matroid $$M_{\F} = \bigoplus_{i = 1}^{k+1} M|F_i/F_{i-1}.$$
In particular, if $\mathcal{F}$ is a maximal flag, then $|{\mathcal{T}}({\F})| = 2^{d}$.
\end{lemma}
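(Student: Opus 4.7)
The plan is to prove the formula via a bijection
\[
\mathcal{T}(\F) \longleftrightarrow \prod_{i=1}^{k+1} \mathcal{T}(\M|F_i/F_{i-1})
\]
between topes adjacent to $\F$ and tuples of topes of the oriented minors $\M|F_i/F_{i-1}$, whose underlying matroids are precisely the summands of $M_\F$. Granting this, applying the Zaslavsky--Las Vergnas theorem to each factor and invoking the multiplicativity of the characteristic polynomial over direct sums of matroids yields
\[
|\mathcal{T}(\F)| \;=\; \prod_{i=1}^{k+1} |\chi_{M|F_i/F_{i-1}}(-1)| \;=\; |\chi_{M_\F}(-1)|.
\]

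The forward map sends $T\in\mathcal{T}(\F)$ to the tuple $(T|_{F_i\setminus F_{i-1}})_{i=1}^{k+1}$. It is well-defined: the restriction of $T\setminus F_{i-1}\in\mathcal{C}$ to $F_i$ is a covector of $\M|F_i$ that vanishes on $F_{i-1}$, which by the definition of contraction descends to a covector of $\M|F_i/F_{i-1}$; as its support equals all of $F_i\setminus F_{i-1}$, it is a tope. Injectivity is immediate from $E = \sqcup_i(F_i\setminus F_{i-1})$.

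The main step is surjectivity, which I plan to prove by induction on $k$, peeling off the bottom flat $F_1$. Given $T_1\in\mathcal{T}(\M|F_1)$ and (by the inductive hypothesis) a $T''\in\mathcal{T}_{\M/F_1}(\F/F_1)$ corresponding to the rest of the tuple, let $\tilde T''$ be the unique covector of $\M$ that vanishes on $F_1$ and restricts to $T''$ on $E\setminus F_1$ (existence follows from the definition of contraction, uniqueness since a covector is determined by its sign vector on $E$), and let $\tilde T'$ be any covector of $\M$ restricting to $T_1$ on $F_1$. Set $T := \tilde T''\circ\tilde T'$. This is a covector of $\M$ by axiom~(3), and a direct coordinate-wise check gives $T|_{F_1}=T_1$ (since $\tilde T''$ vanishes there) and $T|_{E\setminus F_1}=T''$ (since $T''$ has full support), so $T$ is a tope. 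For each $j\geq 2$, the sign vector of $T\setminus F_j$ ($0$ on $F_j$, $T''$ on $E\setminus F_j$) coincides with the unique lift to $\M$ of $T''\setminus(F_j/F_1)\in\mathcal{C}_{\M/F_1}$, the latter lift being a covector by the inductive hypothesis applied to $T''$; hence $T\setminus F_j\in\mathcal{C}$. The induction's base case is the trivial flag $\emptyset\subsetneq E$, where the identity reduces to Zaslavsky--Las Vergnas for $\M$ itself.

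The final claim about maximal flags then follows because each factor $M|F_i/F_{i-1}$ is a loopfree matroid of rank one, whose characteristic polynomial is $t-1$, giving $|\chi_{M_\F}(-1)|=2^d$. The main obstacle in this plan is the verification that the intermediate truncations satisfy $T\setminus F_j\in\mathcal{C}$ for all $j\geq 2$; this is precisely where the uniqueness of the lift from $\M/F_1$ to $\M$ (among covectors supported off $F_1$) becomes essential, since it allows us to identify $T\setminus F_j$ with a covector already guaranteed by the inductive hypothesis rather than having to construct it from scratch.
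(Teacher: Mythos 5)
Your argument is correct and reaches the result by the same structural route as the paper: identify $\mathcal{T}(\F)$ with the topes of the direct sum oriented matroid $\MMM_\FFF = \bigoplus_i \MMM|F_i/F_{i-1}$, then apply the Zaslavsky--Las~Vergnas count to each (loopfree) summand and use the multiplicativity of the characteristic polynomial. The difference is one of packaging: the paper simply cites Proposition~3.7.11 of Bj\"orner et al.\ for the identification $\mathcal{T}(\F) = \mathcal{T}(\MMM_\FFF)$, whereas you give a self-contained proof of that statement. Your forward map $T \mapsto (T|_{F_i\setminus F_{i-1}})_i$ and the injectivity check are immediate, and the inductive surjectivity argument (building $T = \tilde T''\circ \tilde T'$ and verifying $T\setminus F_j \in \CC$ by identifying it with the unique lift to $\M$ of $T''\setminus(F_j/F_1)$, which exists by the inductive hypothesis that $T''$ is adjacent to $\F/F_1$) is sound. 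Two small points worth making explicit if you were to write this out in full: first, the existence of some covector $\tilde T'$ of $\M$ restricting to $T_1$ on $F_1$ comes from the definition of $\M|F_1$ as the deletion $\M\backslash(E\setminus F_1)$; second, to make the induction fully rigorous one should note the standard isomorphism $(\M/F_1)|(F_i/F_1)\big/(F_{i-1}/F_1)\cong \M|F_i/F_{i-1}$ so that the inductive hypothesis for $\F/F_1$ in $\M/F_1$ really produces the remaining factors of the tuple. With those small additions the argument is a complete proof of the cited result, so your route buys self-containment at the cost of some length, while the paper's buys brevity by outsourcing the combinatorial heart of the lemma to the literature.
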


\begin{proof}
The number of topes of a loopfree oriented matroid $\mathcal{N}$  is equal to  $| \chi_{\underline{\mathcal{N}}}(-1)|$ \cite{Zaslavsky}.
Moreover, 
by \cite[Proposition 3.7.11]{bjorner} the set ${\mathcal{T}}({\F})$ is equal to 
the set of topes of the oriented matroid $\MMM_{\F}= \bigoplus_{i = 1}^{k+1} \MMM|F_i/F_{i-1}$.
Since $\underline{\MMM}_\FFF = M_\FFF$, the statement follows. 
 \end{proof}

\begin{remark}
We would like to make the following remark on our choice of conventions. In this paper, we use both the multiplicative and additive notation on the group of two elements $( \{0, 1\}, + ) $ and $(\{1, -1\}, \cdot)$. 
When speaking of real phase structures we work with vector spaces over  $\Z_2$, therefore it is preferable to use the additive notation and denote the field of two elements 
by $\{0, 1\}$. On the other hand it  is tradition that the covectors of oriented matroids take values in $\{0, +, -\}$ and we also make use of the group structure on $\{+, -\}$. 
We routinely use the notation $\varepsilon$ to denote elements of the field $\{0,1\} $ or of vector spaces over this field.  We use uppercase roman letters, for example, $X, Y, T$, to denote 
covectors. 
Covectors can be multiplied entry by entry and
this operation is denoted by $T\cdot T'$. 

To go from the additive group notation to the multiplicative group notation for vectors, we use $(-1)^{\varepsilon} = ((-1)^{\varepsilon_1} , \dots, (-1)^{\varepsilon_n}) $, where $\varepsilon = (\varepsilon_1, \dots, \varepsilon_n)$. 
This defines a bijection
\begin{align} \label{eq:ExpMap} 
  \Z_2^E &\to \{+1,-1\}^E, \\
	\varepsilon &\mapsto (-1)^\varepsilon. \nonumber
\end{align}

We avoid going backwards as much as possible 
to avoid writing such perversities as $\log_{-1}$, even though this map makes sense as a discrete logarithm for groups. 
\end{remark}

\begin{definition} \label{def:OrientedToRealPhase}
  Let  $\M$ be a loopfree oriented matroid on the ground set $E$ and  let $\Sigma_M \subseteq \R^E$ the fan of the underlying matroid $M = \underline{\M}$. 
	For every facet $\sigma_\FFF$ of $\Sigma_M$ corresponding to the  maximal flag of flats $\F$, we set
	\[
	  \E_{\M}(\sigma_{\mathcal{F}} ) = \{ \varepsilon \ | \ (-1)^{\varepsilon} \in \mathcal{T}(\F)\} \subseteq  \Z_2^E.
	\]
	If $\MMM$ has loops $L = \cl(\emptyset)$, we set $\EEE_\MMM(\sigma) := \EEE_{\MMM\backslash L}(\sigma) \subseteq \Z_2^{E\setminus L}$.
\end{definition}

In the next two lemmata, we show that $\EEE_\MMM$ defines a real phase structure on $\Sigma_M$. We start with a few useful observations. 
If $T$ is a tope and $X$ any covector then $T \circ X = T$ and $X \circ T$ is always another tope, which is distinct from $T$ if and only if $X^{+}\nsubseteq T^{+}$ or $X^{-}\nsubseteq T^{-}$. 
Given a subset $F \subset E$, 
the \emph{reflection} $r_F(X)$ of a covector $X$ in $F$ 
is given by flipping the signs for all $e \in F$ while keeping
the signs for $e \in E \setminus F$. The reflection of a covector $X$ in a flat $F$ is not always a covector of the oriented matroid.
However, note that if $F$ is adjacent to the tope $T$, then $r_F(T)$ is also a tope.
Indeed, setting $X = T \setminus F$, note that we can rewrite $r_F(T) = X \circ (-T)$,
hence the statement. 

Recall that given any flat $F$ of $M$, there is a vector in $\Sigma_M$ defined by $v_{F}:=\sum_{i\in F}v_i$, where $v_i=-e_i$, see Section \ref{subsecMatroids}. 
We denote by $\varepsilon_F$ the reduction of $v_F$ modulo $2$. 
Note that $ r_F((-1)^\varepsilon) = (-1)^{\varepsilon + \varepsilon_F}$.

\begin{lemma}\label{lemma:topesAffineSpace}
  The set $\EEE_\MMM(\sigma_\FFF)$ from Definition \ref{def:OrientedToRealPhase}
	is a $d$-dimensional  affine subspace parallel to $T_{\Z_2}(\sigma_\FFF)$
	for every facet $\sigma_\FFF$ of $\Sigma_M$. 
\end{lemma}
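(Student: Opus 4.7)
The plan is to exhibit $\EEE_\MMM(\sigma_\FFF)$ as a single coset of the $\Z_2$-subspace $T_{\Z_2}(\sigma_\FFF)$. First, I would identify the tangent space explicitly: since $\FFF$ is a maximal flag on a loopfree rank-$d$ matroid, its length is $d-1$ and $T_\Z(\sigma_\FFF) = \langle v_{F_1}, \ldots, v_{F_{d-1}}, v_E \rangle_\Z$. Reducing mod 2 yields the indicator vectors $\varepsilon_{F_1}, \ldots, \varepsilon_{F_{d-1}}, \varepsilon_E$; these are $\Z_2$-linearly independent, as one sees by evaluating any relation successively at coordinates in $E \setminus F_{d-1}$, in $F_{d-1} \setminus F_{d-2}$, and so on down to $F_1$, which forces each coefficient to vanish. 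Hence $T_{\Z_2}(\sigma_\FFF)$ is $d$-dimensional, and in particular $|T_{\Z_2}(\sigma_\FFF)| = 2^d$, which matches $|\TTT(\FFF)|$ from Lemma \ref{lemma:numberoftopes}.

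Next, I would show that $\EEE_\MMM(\sigma_\FFF)$ is stable under translation by each of $\varepsilon_{F_1}, \ldots, \varepsilon_{F_{d-1}}, \varepsilon_E$, by reformulating this as stability of $\TTT(\FFF)$ under the corresponding sign reflections. For $F \in \{F_1,\ldots,F_{d-1},E\}$, let $r_F$ denote the sign vector reflection that flips the entries indexed by $F$; under the bijection $\varepsilon \mapsto (-1)^\varepsilon$ from \eqref{eq:ExpMap}, translation by $\varepsilon_F$ matches $r_F$. The key identity $r_F(T) = (T \setminus F) \circ (-T)$ holds whenever $T \setminus F \in \CCC$; since $F$ is adjacent to $T$ by definition of $\TTT(\FFF)$, the composition axiom (3) produces $r_F(T) \in \CCC$, and maximality of $T$ under the partial order on $\CCC$ ensures $r_F(T)$ is again a tope. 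The case $F = E$ reduces to $r_E(T) = -T$, handled by the symmetry axiom (2).

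The main obstacle is then verifying that the reflected tope remains adjacent to every flat in the flag, i.e., that $r_{F_i}(T) \setminus F_j \in \CCC$ for all $j$. If $F_i \subseteq F_j$, then the signs flipped on $F_i$ are immediately zeroed, so $r_{F_i}(T) \setminus F_j = T \setminus F_j$, which is a covector by assumption. If $F_j \subsetneq F_i$, I would set $Y := T \setminus F_j \in \CCC$, note that $Y \setminus F_i = T \setminus F_i \in \CCC$ so that $F_i$ is adjacent to $Y$, and invoke the composition identity of the previous paragraph to obtain $r_{F_i}(Y) = (Y \setminus F_i) \circ (-Y) \in \CCC$; a direct coordinatewise comparison then shows $r_{F_i}(Y) = r_{F_i}(T) \setminus F_j$. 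Combining these ingredients, $\EEE_\MMM(\sigma_\FFF)$ is a union of $T_{\Z_2}(\sigma_\FFF)$-cosets, and the cardinality count from the first paragraph forces it to be exactly one coset; this is the $d$-dimensional affine subspace parallel to $T_{\Z_2}(\sigma_\FFF)$ required by the lemma.
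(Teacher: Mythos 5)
Your proposal is correct and follows essentially the same route as the paper: both arguments show that $\TTT(\FFF)$ is closed under the reflections $r_{F_1},\dots,r_{F_{d-1}},r_E$ via the identity $r_F(X)=(X\setminus F)\circ(-X)$ and the covector composition axiom, checking the two cases $F_i\subseteq F_j$ and $F_j\subsetneq F_i$ exactly as the paper does for $j\geq i$ and $j\leq i$, and then conclude by the cardinality count $|\TTT(\FFF)|=2^d$ from Lemma \ref{lemma:numberoftopes}. The only difference is cosmetic (you spell out the $\Z_2$-linear independence of $\varepsilon_{F_1},\dots,\varepsilon_{F_{d-1}},\varepsilon_E$, which the paper leaves implicit), so no further comment is needed.
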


\begin{proof}
We may assume that $\MMM$ is loopfree. 
Let  $\sigma_\FFF$ be a facet of $\Sigma_M$ with associated maximal flag
	$$\mathcal{F} = \{\emptyset \subsetneq F_1 \subsetneq \dots \subsetneq F_{d-1} \subsetneq F_{d} = E\}.$$
Let $T = (-1)^\varepsilon \in  \mathcal{T}(\mathcal{F})$ be a tope adjacent to $\FFF$.
We need to show that the bijection from (\ref{eq:ExpMap})  maps 
$\varepsilon + \langle \varepsilon_{F_1} , \dots, \varepsilon_{F_{d-1}} , \varepsilon_E  \rangle_{\Z_2}$
to $\TTT(\FFF)$. 
Note that $|\mathcal{T}(\mathcal{F})| = 2^{d}$ by Lemma \ref{lemma:numberoftopes},
so the two sets have the same size. 
Therefore, it suffices to show containment of the image in $\TTT(\FFF)$.
For this, it is enough to prove
that $T' = r_{F_i}(T) \in \TTT(\FFF)$ for every $i = 1, \dots, d$. Note that $T' \setminus F_j = T \setminus F_j$ for $j \geq i$ and $T' \setminus F_j = (T \setminus F_i) \circ (- T \setminus F_j)$ for $j \leq i$.
In both cases, $T' \setminus F_j$ is a covector, and hence $T' \in \mathcal{T}(\mathcal{F})$ as required. 
\end{proof}

\begin{lemma}\label{lemma:orientedMatCircuitArrangement} 
Let $\M$ be a oriented matroid and set $M = \underline{\MMM}$.
Let  $\tau$ be a codimension one face of $\Sigma_M$ and let $\sigma_1, \dots, \sigma_k$ be the adjacent facets. 
Then the subspaces $\EEE_\MMM(\sigma_i)$ for $i =1, \dots,k$ form an even covering. 
\end{lemma}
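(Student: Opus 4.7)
The approach is to reduce the even covering condition at $\tau$ to a combinatorial count in a rank $2$ oriented matroid minor. As in the proof of Lemma \ref{lem:2'equiv}, the codimension one face $\tau$ corresponds to a chain of flats of $M$ with a unique rank $2$ jump at some index $s$: $\rk(F_s) = \rk(F_{s-1})+2$. The adjacent facets $\sigma_1,\dots,\sigma_k$ are in bijection with flats $H_i$ satisfying $F_{s-1}\subsetneq H_i\subsetneq F_s$, which in turn are in bijection with the rank $1$ flats $H_i':=H_i\setminus F_{s-1}$ of the rank $2$ minor $\MMM':=\MMM|F_s/F_{s-1}$. The plan is to prove the sharper statement that the covering is uniformly $2$-fold: for every $\varepsilon \in \bigcup_i \EEE_\MMM(\sigma_i)$, exactly two of the $\EEE_\MMM(\sigma_i)$ contain $\varepsilon$.

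The first key step is to translate adjacency in $\MMM$ into adjacency in $\MMM'$. Using the compound-minor description of $\TTT(\FFF)$ cited in Lemma \ref{lemma:numberoftopes}, for any tope $T\in\TTT(\tau)$ with associated restriction $T' := T|_{F_s \setminus F_{s-1}}$, all conditions determining $T\in\TTT(\sigma_i)$ outside the rank $2$ gap are independent of $i$, so the question reduces to the equivalence
\[
T \in \TTT(\sigma_i) \iff T'\setminus H_i' \text{ is a covector of } \MMM'.
\]
The forward direction is immediate from the description of minor covectors as restrictions. For the converse, take any lift $V'$ of $T'\setminus H_i'$ to a covector of $\MMM/F_{s-1}$ and compose it via the oriented matroid operation $\circ$ with the restriction to $E\setminus F_{s-1}$ of $T\setminus F_s$, which is a covector of $\MMM/F_{s-1}$ since $T\in\TTT(\tau)$. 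A direct check shows that the result, lifted back to $\MMM$, coincides with $T\setminus H_i$.

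The second key step is a standard structural fact for loopfree rank $2$ oriented matroids: every tope is covered in the covector poset by exactly two rank $1$ covectors, hence adjacent to exactly two rank $1$ flats. Geometrically, this reflects that each chamber in a central line arrangement in $\R^2$ has exactly two bounding rays. Combining the two steps, for every $\varepsilon$ in the union with corresponding tope $T = (-1)^\varepsilon$, the induced $T'$ is a tope of $\MMM'$ and is adjacent to exactly two of the $H_i'$; consequently, exactly two of the sets $\EEE_\MMM(\sigma_i)$ contain $\varepsilon$. This yields the $2$-fold covering, which in particular is even.

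The main obstacle is the converse direction in the first step, i.e., showing that $T'\setminus H_i'$ being a covector of $\MMM'$ forces $T\setminus H_i$ to be a covector of $\MMM$. The composition argument succeeds because $T\setminus F_s$ pins down the signs on $E\setminus F_s$ while $V'$ pins down the signs and zeros on $F_s\setminus F_{s-1}$; the operation $\circ$ glues them into $T\setminus H_i$. The remainder of the proof is essentially bookkeeping around the compound minor structure of $\TTT(\FFF)$ and a direct inspection of rank $2$ oriented matroids.
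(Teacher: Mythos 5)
Your argument is correct and reaches the same sharper conclusion as the paper (each point of the union is covered exactly twice), but it gets there by a genuinely different mechanism. The paper works entirely inside the covector lattice of $\MMM$ itself: completions of the flag $\FFF$ of $\tau$ to maximal flags with $T$ adjacent are put in bijection with completions of the covector chain $0 < T\setminus F_k < \dots < T\setminus F_1 < T$, and the unique length-two interval $[T\setminus F_s,\, T\setminus F_{s-1}]$ has exactly four elements by the diamond property of covector lattices (Björner et al., Theorem 4.1.14(ii)), giving exactly two completions in one line. You instead push the problem into the rank~$2$ minor $\MMM|F_s/F_{s-1}$ via the compound-minor description of $\TTT(\FFF)$ and then invoke the fact that a tope of a loopfree rank~$2$ oriented matroid covers exactly two rank~$1$ covectors. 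That rank~$2$ fact is itself an instance of the diamond property, so the two proofs are cousins; the price of your route is the transfer lemma $T\in\TTT(\sigma_i)\iff T'\setminus H_i'\in\CCC(\MMM')$, which is the real technical content of your write-up and which the paper's argument avoids entirely. The benefit is that your version makes explicit the local rank~$2$ picture that the paper only exploits later (in Lemma \ref{lem:2'equiv} and the necklace reformulation), so the two-fold covering acquires a concrete geometric meaning.

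One small repair in the converse of your transfer step: as literally written, you form $V'\circ\bigl((T\setminus F_s)|_{E\setminus F_{s-1}}\bigr)$, but an arbitrary lift $V'$ of $T'\setminus H_i'$ may carry arbitrary signs on $E\setminus F_s$, and in the composition $X\circ Y$ the first factor wins wherever it is nonzero; so this order need not return $T\setminus H_i$. The composition must be taken in the other order, $\bigl((T\setminus F_s)|_{E\setminus F_{s-1}}\bigr)\circ V'$: the first factor then pins the signs on $E\setminus F_s$ to those of $T$, vanishes on $F_s\setminus F_{s-1}$ so that $V'$ dictates the signs and zeros there, and the result is exactly $(T\setminus H_i)|_{E\setminus F_{s-1}}$, a covector of $\MMM/F_{s-1}$ that extends by zero to the covector $T\setminus H_i$ of $\MMM$. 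Your prose explanation of which factor ``pins down'' what shows you intended precisely this, so this is a slip of notation rather than a gap, but the order should be corrected.
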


\begin{proof} 
  Again, we can reduce to the case when $\MMM$ is loopfree. 
  Let $\FFF = \{\emptyset \subsetneq F_1 \subsetneq \dots \subsetneq F_k \subsetneq E\}$ be the chain of flats associated to $\tau$ and pick $T \in \TTT(\FFF)$. 
	Since $\tau$ is of codimension one, there is exactly one rank two jump in $\FFF$. 
  We need to show 
	that there is an even number of completions of $\FFF$ to maximal chains  $\FFF'$ such that $T \in \TTT(\FFF')$ 
	Clearly, such completions are in bijection with the completions of  the chain
	of covectors $(0,\dots, 0) < X_k < X_{k-1} < \dots < X_1 < T$
	with $X_i := T \setminus F_i$.
	Now, recall from \cite[Theorem 4.1.14 (ii))]{bjorner} that the covector lattice of an oriented matroid satisfies 
	the diamond property, that is, all intervals of length $2$ consist of $4$ elements. 
	It follows that there are exactly two such completions, which proves the claim.
\end{proof}

\begin{proposition}\label{prop:oriented2phase}
  The map $\E_{\M} \colon \text{Facets}(\Sigma_M) \to \Aff_{d}(\Z^{E \setminus \cl(\emptyset)})$
	from Definition \ref{def:OrientedToRealPhase}
	defines a real phase structure on $\Sigma_M$.
\end{proposition}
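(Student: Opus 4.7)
The statement asserts that the map $\EEE_\MMM$ from Definition \ref{def:OrientedToRealPhase} satisfies both conditions of Definition \ref{def:realstructure}, so my plan is simply to observe that the two preceding lemmata together give exactly what is required. There should be nothing left to do beyond assembling them, after a quick sanity check that the loop case reduces cleanly to the loopfree case.

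First, I would reduce to the loopfree case. By convention, $\EEE_\MMM(\sigma) := \EEE_{\MMM \backslash L}(\sigma)$ where $L = \cl(\emptyset)$, and likewise $\Sigma_M = \Sigma_{M \backslash L}$, so a real phase structure on $\Sigma_M$ in the sense of Definition \ref{def:realstructure} is the same thing as a real phase structure on $\Sigma_{M \backslash L}$ viewed in $\R^{E\setminus L}$. Hence I may assume $\MMM$ is loopfree.

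Next, Condition (1) of Definition \ref{def:realstructure} is precisely the content of Lemma \ref{lemma:topesAffineSpace}: for each facet $\sigma_\FFF$ of $\Sigma_M$, the set $\EEE_\MMM(\sigma_\FFF)$ is a $d$-dimensional affine subspace of $\Z_2^{E}$ with tangent space $T_{\Z_2}(\sigma_\FFF)$. Condition (2), the even covering property at each codimension one face $\tau$ with adjacent facets $\sigma_1,\dots,\sigma_k$, is exactly Lemma \ref{lemma:orientedMatCircuitArrangement}. Combining these two lemmata gives the proposition.

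Since all the real work is carried by the two preceding lemmata, no genuine obstacle remains here. If anything, the only point that merits a sentence is to make clear that Definition \ref{def:realstructure} is being applied directly, and that the containment $\EEE_\MMM(\sigma_\FFF) \subseteq \Z_2^E$ (rather than some other ambient space) lines up with the fact that $\Sigma_M \subseteq \R^E$ under our loopfree reduction.
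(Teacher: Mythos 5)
Your proposal is correct and matches the paper's own proof, which likewise just cites Lemma \ref{lemma:topesAffineSpace} for Condition (1) and Lemma \ref{lemma:orientedMatCircuitArrangement} for Condition (2). The extra sentence on the loop reduction is harmless but not needed, as both lemmata already perform that reduction internally.
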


\begin{proof}
By Lemma \ref{lemma:topesAffineSpace} the first axiom of a real phase structure is satisfied. 
 By Lemma \ref{lemma:orientedMatCircuitArrangement} the second axiom is satisfied. 
\end{proof}

To summarise, we have constructed a map from orientations of $M$ (that is, oriented matroids $\MMM$ such that $\underline{\MMM} = M$) 
to real phase structures on $\Sigma_M$,
\begin{equation} \label{eq:orientedtorealstr}
\begin{tikzcd}
\{{\text{Orientations  of }} M \} \arrow[r, "\mathbf{E}"]  & \{{\text{Real phase structures on }} \Sigma_M \},
\end{tikzcd}
\end{equation}
which is given by $\mathbf{E}(\MMM) = \EEE_\MMM$. 
The main result of this paper claims that this map is bijective.
We note that the map is injective, since the topes of $\MMM$ can be recovered from $\EEE_{\MMM}$
as $$\TTT = \{ (-1)^{\varepsilon} \ | \ \varepsilon \in \bigcup_\sigma  \EEE_{\MMM}(\sigma)\},$$ where $\sigma$ runs through all facets of $\Sigma_M$ and moreover, 
an oriented matroid is determined by its collection of topes \cite{topeAxioms}. 

\begin{figure}
\begin{tikzpicture}[scale=1.3]
    \draw[thick, -] (-2, 0) -- (2,0)   node[right] {$H_3$};
    \draw[thick, -] (0,-2) -- (0,2) node[right, above] {$H_4$};
    \draw[thick, -] (-1.5, -1.5) -- (1.5,1.5) node[right] {$H_1$};
    \draw[thick, -] (-1.5,1.5) -- (1.5,-1.5) node[right] {$H_2$};
   \put(40,-25){\tiny{$(+, +, +, +)$}};
     \put(40,25){\tiny{$(+, +, -, +)$}};
          \put(5,55){\tiny{$(-, +, -, +)$}};
       \put(-45,55){\tiny{$(-, +, -, -)$}};
      \put(-80,25){\tiny{$(-, -, -, -)$}};
         \put(-80,-25){\tiny{$(-, -, +, -)$}};
           \put(5,-55){\tiny{$(+, -, +, +)$}};
                  \put(-45,-55){\tiny{$(+, -, +, -)$}};

\end{tikzpicture}
\caption{An arrangement of $4$ lines through the origin in  $\R^2$  as used in Example \ref{ex:dim1} to go from real phase structures to oriented matroids. The necklace ordering of the lines corresponds to the necklace ordering of the facets of $\projFan_M$ defined by the real phase structure in Figure \ref{fig:reallineU34}.}
\label{fig:reallinesig}
\end{figure}
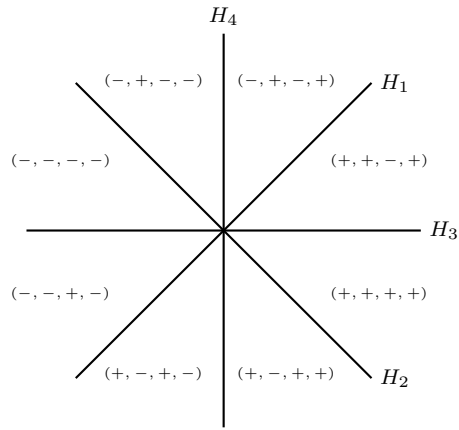

\begin{remark} \label{ex:Reorient}
  Given an oriented matroid $\MMM$ and a subset $S \subset E$, 
	the reorientation of $\MMM$ along $S$ is the oriented matroid 
	$\MMM'$ whose topes are the covectors $r_S(T)$ for any tope $T$
	of $\MMM$. 
	Clearly, in this case $\EEE_{\MMM'}$ is a reorientation of $\EEE_\MMM$ 
	(in the sense of Definition \ref{def:reorientation}) with translation vector $\varepsilon_S$. 
	Hence the map $\mathbf{E}$ from (\ref{eq:orientedtorealstr}) descends to a map modulo reorientations on both sides. 
\end{remark}

\begin{example}\label{ex:dim1}
Here we consider the matroid $M = U_{2, n}$ and describe 
how we can construct an inverse to the map in  (\ref{eq:orientedtorealstr}).
By Example \ref{ex:line}, a real phase structure $\E$ on $\affFan_M$ determines a necklace ordering of $\{1, \dots, n\}$. 
Note that Example \ref{ex:line}  described the projective matroid fan which can be obtained from  $\affFan_M$ by quotienting by $(1, \dots, 1)$. 
 To determine an orientation of $M$ from the real phase structure $\E$, let 
$H_1, \dots, H_n \subset \R^2$ be a collection of pairwise distinct lines passing through the origin in $\R^2$, 
arranged so that the clockwise/anticlockwise
appearance of these lines when making a turn around the origin defines the same  necklace ordering 
on $\{1, \dots, n\}$ as the real phase structure $\E$. 
Two  lines $H_i, H_j$ border a chamber of $\R^2$ if and only if $\E(\sigma_i) \cap \E(\sigma_j) \neq \emptyset$. Moreover, if this is satisfied then $H_i, H_j$ border exactly two chambers which are related by the antipodal map. 
If the intersection of the two affine spaces $\E(\sigma_i)$ and $\E(\sigma_j)$ is non-empty, then it consists of two points $\varepsilon, \varepsilon' = \varepsilon + (1, \dots, 1)$. 
Assigning $(-1)^{\varepsilon} $ to one of the chambers bordered by $H_i, H_j$ and $(-1)^{\varepsilon'} $ to the other  chamber determines an orientation of $U_{2,n}$.  
See Figure \ref{fig:reallinesig}  for the example of the $4$ lines in $\R^2$ corresponding to the real phase structure on the projective fan of $U_{2,4}$ from Figure \ref{fig:reallineU34}.
This procedure associates to any real phase structure on $\Sigma_M$, an  orientation of $M$ and it is easily checked that it 
provides an inverse map to $\mathbf{E}$ from (\ref{eq:orientedtorealstr}). 
\end{example}

\begin{example}\label{ex:codim1}
Here we consider the matroid $M = U_{n-1, n}$. 
Orientations of $M$ can be constructed from generic real hyperplane arrangements like in Example \ref{RealHyperplanes}. 
Moreover, in Example \ref{ex:hyperplane} we showed that $\Sigma_M$ carries a unique real phase structure up to reorientation. 
By Remark \ref{ex:Reorient}, the map  (\ref{eq:orientedtorealstr}) is surjective and hence bijective. 
An analogous discussion shows that (\ref{eq:orientedtorealstr}) is bijective for $M = U_{1,n}$. 
\end{example}

Our main theorem is to prove the equivalence of real phase structures on matroid fans and matroid orientations.
In the more general framework of matroids over hyperfields, oriented matroids are identical to matroids over the sign hyperfield $\mathbb{S} = \{0,+,-\}$ \cite{BakerHyperfields}. 
More precisely, a chirotope describing an oriented matroid can be interpreted as Grassmann-Plücker function
on $E^r$ with values in $\mathbb{S}$. Correspondingly, the affine subspaces  provided by real phase structures
live in a vector space over $\Z_2 \cong \mathbb{S}^*$. For a general hyperfield $\mathbb{H}$ the non-zero elements form a group $\mathbb{H}^*$ under multiplication and we can exchange the role of affine subspaces of 
$\Z_2^n$  for  cosets of subgroups of $(\mathbb{H}^*)^n$.

\begin{ques}\label{ques:hyperfields}
Can matroids over a general hyperfield $\mathbb{H}$ be equivalently formulated by specifying cosets of the group $(\mathbb{H}^*)^n$ on top dimensional faces of a matroid fan $\Sigma \subseteq \R^n$?
\end{ques}

We finish this subsection by showing that the operations of deletion and contraction
on both oriented matroids and real phase structures commute with the map 
$\mathbf{E}$ from Equation (\ref{eq:orientedtorealstr}). We will use the more convenient notation of $\mathcal{E}_{\mathcal{M}} $ to denote the real phase structure $\mathbf{E}(\mathcal{M})$.

\begin{proposition} \label{prop:CompOrientedReal}
  Let $\MMM$ be an oriented matroid on $E$ and $i \in E$. Then
	\[ 
	  \EEE_{\MMM/i} = \EEE_\MMM/i \quad \text{ and } \quad \EEE_{\MMM \backslash i} = \EEE_\MMM \backslash i.
	\]

\end{proposition}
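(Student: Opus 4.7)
By Propositions \ref{prop:oriented2phase} and \ref{realstructure_contraction_deletion}, both sides of each identity define real phase structures on the same matroid fan; in particular, at each facet $\sigma$ they are affine subspaces of the same dimension, both parallel to $T_{\Z_2}(\sigma)$. Since two parallel affine subspaces of the same dimension are either disjoint or equal, the plan is to exhibit, at each facet, a single element belonging to both sides; equality on the nose then follows automatically.

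After reducing to the loopfree case via the convention $\EEE_\MMM = \EEE_{\MMM \backslash \cl_M(\emptyset)}$ (which makes the case of $i$ a loop trivial), we consider first the deletion identity. Fix a facet $\sigma_\FFF$ of $\Sigma_{M \backslash i}$ corresponding to a maximal flag $\FFF$ of flats of $M \backslash i$. By Lemma \ref{lemma:numberoftopes} applied to the maximal flag $p_i^*(\FFF)$ of $M$, we pick a tope $T \in \TTT_\MMM(p_i^*(\FFF))$. By Definitions \ref{def:OrientedToRealPhase} and \ref{InducedRealStructures}, the element of $\Z_2^{E \setminus i}$ corresponding to $T|_{E \setminus i}$ via the bijection in (\ref{eq:ExpMap}) lies in $(\EEE_\MMM \backslash i)(\sigma_\FFF)$. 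I claim that it also lies in $\EEE_{\MMM \backslash i}(\sigma_\FFF)$, i.e.\ that $T|_{E \setminus i} \in \TTT_{\MMM \backslash i}(\FFF)$: being the restriction of a tope of $\MMM$, it is a tope of $\MMM \backslash i$, and adjacency to each $F_j$ follows by restricting the covector $T \setminus \cl_M(F_j) \in \CCC(\MMM)$ to $E \setminus i$, using the identity $\cl_M(F_j) \setminus \{i\} = F_j$ (which holds because $F_j$ being $M\backslash i$-closed forces $\cl_M(F_j) \in \{F_j, F_j \cup \{i\}\}$), thereby producing exactly $T|_{E \setminus i} \setminus F_j$ as a covector of $\MMM \backslash i$.

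The contraction identity is handled analogously. Fix a facet $\sigma_\FFF$ of $\Sigma_{M/i}$ and pick $T \in \TTT_\MMM(p_{\cl(i)}^\diamond(\FFF))$ via Lemma \ref{lemma:numberoftopes}. Because $\cl(i)$ appears in the flag $p_{\cl(i)}^\diamond(\FFF)$, the covector $T \setminus \cl(i)$ vanishes on $\cl(i)$, and its restriction $T|_{E \setminus \cl(i)}$ is by definition a tope of $\MMM/i$. Adjacency to each $F_j \in \FFF$ follows by restricting the covector $T \setminus (F_j \cup \cl(i))$ to $E \setminus \cl(i)$, yielding $T|_{E \setminus \cl(i)} \setminus F_j$ as a covector of $\MMM/i$. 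Hence $T|_{E \setminus \cl(i)} \in \TTT_{\MMM/i}(\FFF)$ provides the required common element.

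The main technical obstacle is the bookkeeping for the edge case where $i$ is a coloop of $M$: there $\MMM \backslash i = \MMM / i$ and the flag $p_i^*(\FFF)$ is obtained by prolongating $\FFF$ with an extra flat containing $\{i\}$, as specified in Definition \ref{InducedRealStructures}. The argument above goes through with this minor modification, and the content ultimately reduces to the compatibility of flats under deletion and contraction in $M$.
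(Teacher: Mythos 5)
Your proof is correct and follows essentially the same route as the paper's: the paper likewise reduces to showing containment between two affine subspaces of equal dimension and then invokes the description of the covectors of $\MMM\backslash i$ and $\MMM/i$ as projections/restrictions of covectors of $\MMM$ (\cite[Proposition 3.7.11]{bjorner}), exactly the fact you verify by hand when restricting $T\setminus\cl_M(F_j)$ and $T\setminus(F_j\cup\cl(i))$. Your version merely exhibits one common point rather than the full inclusion, which is an equivalent reduction, and spells out the coloop bookkeeping that the paper leaves implicit.
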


\begin{proof}
  We set $\EEE = \EEE_\MMM$ and denote by $\MMM'$, $\EEE'$, $M'$ the contraction or deletion by $i$, respectively. 
	It suffices to show that $\EEE'(\sigma) \subseteq \EEE_\MMM'(\sigma)$ for every facet $\sigma$ of
	$\Sigma_{M'}$, because both sets are affine subspace of equal dimension. 
By  \cite[Proposition 3.7.11]{bjorner} the covectors of $\MMM\backslash i$ are the projections of the covectors of $\MMM$ under $p_i$. 
	Analogously, the covectors of $\MMM/ i$ are the projections under $p_i$ of the covectors of $\MMM$ with $X_i = 0$
	(hence they are zero on $\cl(i)$). 
	Comparing with Definitions \ref{InducedRealStructures} and \ref{def:OrientedToRealPhase}, the inclusion then follows.
\end{proof}

\subsection{Real subfans and oriented matroid quotients}
\label{subsec:orientedmatroidquotients}

In this subsection, we introduce the concept of \emph{real subfans} and study 
their relationship to  oriented matroid quotients. 
Besides the intrinsic importance of
these constructions, they will be used in the following subsection in the  induction step in the proof of  Theorem \ref{thm:orientedphase}.

Given a real phase structure $\EEE$ on a polyhedral fan $\Sigma$, we extend the definition
of $\EEE$ to non-maximal cones $\tau \in \Sigma$ by setting
\[
 \EEE(\tau) = \bigcup_{\substack{\sigma \text{ facet} \\ \tau \subset \sigma}} \EEE(\sigma).
\]
As an example, note that if $\EEE = \EEE_\MMM$ is induced by an oriented matroid $\MMM$, 
the description of $\EEE(\sigma_\FFF)$ as the set of elements $\varepsilon$
such that $(-1)^\varepsilon$ is adjacent to $\FFF$ extends to non-maximal cones $\sigma_\FFF$, that is, 
\begin{equation} \label{eq:topesmatroids} 
  \E_{\M}(\sigma_{\mathcal{F}} ) = \{ \varepsilon \ | \ (-1)^{\varepsilon} \in \mathcal{T}(\F)\} \subseteq  \Z_2^E.
\end{equation}
The set above no longer has the structure of an affine subspace of $\Z_2^E$ when $\sigma_{\F}$ is not a top dimensional face of the fan. 
Indeed, recall the case of the projective matroid fan  $\projFan_M$ for the uniform matroid $M = U_{n,n+1}$ from Example \ref{ex:hyperplane}. If $\tau$ is the vertex of the fan then the set $\EEE(\tau)$ has size $2^n-1$, hence cannot be an affine space over $\Z_2$. 

\begin{definition}
Given a real phase structure $\EEE'$ on a fan $\Sigma'$, we say that $(\Sigma', \EEE')$
is a \emph{real subfan} of $(\Sigma, \EEE)$ if  $\Sigma' \subseteq \Sigma$ and  for any  $\tau \in \Sigma'$  we have
$\EEE'(\tau) \subseteq \EEE(\tau)$. 
\end{definition}

Real subfans occur naturally when considering the pair of contraction
and deletion of a real phase structure along the some subset.

\begin{lemma} \label{lem:subfans}
  Let $M$ be a matroid on the ground set $E$ and let $\EEE$ be real phase structure on $\Sigma_M$. 
	Let $F$ be a flat of $M$. 
	Then $(\Sigma_{M/F}, \EEE/F)$ is a real subfan of $(\Sigma_{M\backslash F}, \EEE \backslash F)$. 
\end{lemma}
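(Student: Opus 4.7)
The plan is to first verify the support inclusion $\Sigma_{M/F} \subseteq \Sigma_{M\backslash F}$, a classical fact about matroid fans obtained by iterating the single-element case: for $i \in E$, the support of $\Sigma_{M/i}$ is precisely the subset of the support of $\Sigma_{M\backslash i}$ over which $p_{\cl(i)}$ has fibres of positive dimension, and commutativity of elementary operations yields the general case. Then I would reduce the real phase structure inclusion to a per-facet statement: for every facet $\sigma$ of $\Sigma_{M/F}$, show that $(\EEE/F)(\sigma) \subseteq \bigcup_{\sigma''\supseteq \sigma}(\EEE \backslash F)(\sigma'')$, where the union runs over facets $\sigma''$ of $\Sigma_{M\backslash F}$ containing $\sigma$; the extension to non-facet cones is then automatic from the union convention.

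Fix such a $\sigma$ with chain $F = F_0 \subsetneq F_1 \subsetneq \cdots \subsetneq F_{d-r} = E$ (with $r = \rk F$), and choose any maximal chain $\emptyset \subsetneq G_1 \subsetneq \cdots \subsetneq G_{r-1} \subsetneq F$ in $M|F$; the resulting refined flag corresponds to a facet $\tilde{\sigma}$ of $\Sigma_M$ with $p_F(\tilde{\sigma}) = \sigma$. By the remark after Definition \ref{InducedRealStructures}, $(\EEE/F)(\sigma) = p_F(\EEE(\tilde{\sigma}))$, and a direct computation gives the direct-sum decomposition $T_{\Z_2}(\tilde{\sigma}) = \Z_2^F \oplus T_{\Z_2}(\sigma)$ (since the indicators of $G_1, \ldots, G_{r-1}, F$ span $\Z_2^F$), so $\EEE(\tilde{\sigma})$ is a $\Z_2^F$-cylinder over $(\EEE/F)(\sigma)$. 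Because $T_{\Z_2}(\sigma)\subseteq T_{\Z_2}(\sigma'')$ whenever $\sigma\subseteq \sigma''$, the affine subspaces $(\EEE/F)(\sigma)$ and $(\EEE\backslash F)(\sigma'')$ are either disjoint or nested. Thus it suffices to exhibit a single facet $\tilde{\sigma}''$ of $\Sigma_M$ on which $p_F$ preserves dimension, with $p_F(\tilde{\sigma}'') \supseteq \sigma$, together with a common lift $\tilde{\varepsilon} \in \EEE(\tilde{\sigma}) \cap \EEE(\tilde{\sigma}'')$.

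To build $\tilde{\sigma}''$, I would start from $\tilde{\sigma}$ and perform iterated local swaps at codimension-one faces of $\Sigma_M$. Given a chosen lift $\tilde{\varepsilon} \in \EEE(\tilde{\sigma})$, the necklace condition at each such face provides a unique neighbouring facet (determined by $\tilde{\varepsilon}$) still containing $\tilde{\varepsilon}$ in its affine space. The first swap replaces the flat $F$ by a rank-$r$ flat $H \not\subseteq F$, which exists because $F_1 \not\subseteq F$ guarantees an atom outside $F$ in the rank-$2$ interval $[G_{r-1}, F_1]$. Successive swaps then replace $G_{r-1}, G_{r-2}, \ldots, G_1$ each by a flat not contained in $F$: the key observation is that at each step the flat to be replaced equals $F \cap K$, where $K \not\subseteq F$ is the flat just above it in the modified chain, so a rank count forces every other necklace-neighbor at that codimension-one face to lie outside $F$.

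The main obstacle I anticipate is ensuring that after all $r$ swaps the final chain projects to a \emph{strictly} ascending chain of flats in $M\backslash F$, as required for $p_F$ to preserve dimension on $\tilde{\sigma}''$. This may require exploiting the $\Z_2^F$-freedom in the cylinder $\EEE(\tilde{\sigma})$ to pick $\tilde{\varepsilon}$ judiciously so that the forced succession of necklace-neighbors introduces a new element outside $F$ at each step. Alternatively, one can bypass this subtlety by proving the lemma by induction on $|F|$, using the elementary single-element case (where the swap argument reduces to one step and the chain projection is automatically strict) together with the commutativity of elementary contractions and deletions established earlier in the subsection.
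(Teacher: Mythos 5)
Your overall strategy matches the paper's: lift a facet of $\Sigma_{M/F}$ to a facet of $\Sigma_M$, fix an element $\varepsilon$ in its affine space, and repeatedly use the even-covering/necklace condition at codimension-one faces to ``swap'' one flat in the flag for another while keeping $\varepsilon$ in the affine space, until the resulting facet is no longer collapsed by $p_F$. The reduction the paper actually performs is to $\rk(F)=1$ (and $M$ loopfree), via the commutativity propositions, rather than to $|F|=1$ — this matters, since for $F=\{i\}$ with $i$ having parallel elements the contraction projects by $p_{\cl(i)}$ while the deletion projects by $p_i$, so the two fans do not even sit in the same ambient space and the ``elementary single-element case'' you lean on is not literally the right base case.

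The genuine gap is the termination claim. You write that in the single-element case ``the swap argument reduces to one step and the chain projection is automatically strict,'' but this is false even for rank-$1$ flats: starting from $\sigma_0$ with flag $\emptyset \subsetneq F \subsetneq F_1\sqcup F \subsetneq \dots$, removing $F$ gives a codimension-one face, and the unique other facet $\sigma_1$ containing $\varepsilon$ may replace $F$ by the rank-$1$ flat $F_1$ (when $F_1$ happens to be a flat of $M$), so that $p_F(\sigma_1)$ is again collapsed. One must then drop $F_1\sqcup F$ and swap again, and so on; there is no a priori bound of one step, and a naive rank count does not force the necklace-neighbor to produce a non-collapsed facet. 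The paper's missing ingredient is the observation that this chain of swaps can only stall at every stage if \emph{every} $F_j$ (in particular $E\setminus F$) is a flat of $M$, which is precisely the condition that $F$ is a connected component of $M$; in that degenerate case $M = M|F\oplus M/F$, the affine matroid fan splits as a product, and the statement is immediate. Without identifying and handling this special case, the iterated swap argument — in either of your two variants — is not known to terminate, and the proof is incomplete.
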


\begin{proof}
	By definition of contraction and deletion for matroids, all flats of  $M/F$ are also flats of $M\backslash F$. In the language of matroids 
	the contraction $M/F$ is a quotient of  the deletion $M\backslash F$. This implies $\Sigma_{M/F} \subseteq \Sigma_{M\backslash F}$, see \cite[Lemma 3.1]{FR-DiagonalTropicalMatroid} or  \cite[Lemma 2.21]{ShawIntMat}.
	For the inclusions $\EEE/F (\tau) \subseteq \EEE \backslash F(\tau)$, by recursion we may reduce to the case where $M$ is loopfree, $\rk(F) = 1$ and $\tau$ a facet of 
	$\Sigma_{M/F}$. 
	
	Let $\{\emptyset \subsetneq F_1 \subsetneq \dots \subsetneq F_l \subsetneq F_{l+1} = E \setminus F\}$ be the chain of flats in $M/F$ 
	corresponding to the face $\tau$. Then $\sigma_0 := p_{F}^{\diamond} (\tau)$ is given by 
	$\FFF_0 = \{\emptyset \subseteq F \subsetneq F_1 \sqcup F \subsetneq \dots \subsetneq F_l \sqcup F \subsetneq E\}$.
	Fix an element $\varepsilon \in \EEE(\sigma_0)$. 
	Removing $F$ from this chain gives rise to a codimension one face $\tau_0$ of $\sigma_0$ such that
	$p_F(\tau_0) = \tau$. By condition (2) of Definition \ref{def:realstructure} there exists
	another facet $\sigma_1$ adjacent to $\tau_0$ such that $\varepsilon \in \EEE(\sigma_1)$. 

	We now distinguish two cases: If $p_F(\sigma_1) \supsetneq \tau$ we are done, since we proved that
	$p_F(\varepsilon) \in \EEE \backslash F (p_F(\sigma_1)) \subseteq \EEE \backslash F (\tau)$. 
	If $p_F(\sigma_1) = \tau$, this implies that $F_1$ is a flat in $M$ and $\sigma_1$ is given by 
	$\FFF_1 = \{\emptyset \subseteq F_1 \subsetneq F_1 \sqcup F \subsetneq \dots \subsetneq F_l \sqcup F \subsetneq E\}$.
	In this case we may follow a similar procedure as above, this time removing the flat $F_1 \sqcup F$ to obtain a face $\tau_1$ of 
	codimension one  of $\sigma_1$ in $\Sigma_M$. Notice that $p_F(\tau_1) = \tau$.   Repeating the argument above we have another facet $\sigma_2$ of $\Sigma_M$ 
	adjacent to $\tau_1$ such that $\varepsilon \in \EEE(\sigma_2)$. 
	Once again, if $p_F(\sigma_2) \supsetneq \tau$ we are done. Otherwise 
	the facet $\sigma_2$ corresponds to a flag $\FFF_2 = \{\emptyset \subseteq F_1 \subsetneq F_2 \subsetneq F_2 \sqcup F \subsetneq \dots \subsetneq F_l \sqcup F \subsetneq E\}$.
	We see that we can continue to repeat the procedure above finding facets $\sigma_k$ of $\Sigma_M$ until  $p(\sigma_k) \supsetneq \tau$, unless it is the case that   
	all $F_i$, and in particular $E \setminus F$, are flats of $M$. This  special case is equivalent to 
	$F$ being a connected component of $M$.
	 In this case, $(\Sigma_{M/F}, \EEE/F) = (\Sigma_{M\backslash F}, \EEE \backslash F)$
	and the statement is trivial. 	
\end{proof}

We denote by $\MMM_1$ and $\MMM_2$ two oriented matroids on the same ground set $E$. For $i=1,2$, 
we denote by $M_i$, $\Sigma_i$, $\EEE_i$ the underlying matroids, associated matroid fans,
and associated real phase structures, respectively. 

We call $\MMM_1$ a \emph{quotient} of $\MMM_2$ if all covectors of $\MMM_1$ are covectors of $\MMM_2$,
see \cite[Section 7.7]{bjorner}. In this case, $M_1$ is a quotient of $M_2$, that is, all flats of $M_1$ 
are flats of $M_2$, see \cite[Corollary 7.7.3]{bjorner}.

\begin{proposition} \label{prop:quotientsubfan}
Let $\MMM_1$ and $\MMM_2$ be two loopfree oriented matroids on $E$.
  Then $\MMM_1$ is a quotient of $\MMM_2$ if and only if $(\Sigma_1, \EEE_1)$ is 
	a real subfan of $(\Sigma_2, \EEE_2)$.
\end{proposition}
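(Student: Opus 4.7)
The plan is to unpack both directions via the extended description of $\EEE_i(\sigma_\FFF)$ on non-maximal cones: by Equation (\ref{eq:topesmatroids}), for any flag of flats $\FFF$ of $M_i$, one has $\EEE_i(\sigma_\FFF) = \{\varepsilon : (-1)^\varepsilon \in \TTT_i(\FFF)\}$, where $\TTT_i(\FFF)$ is the set of topes of $\MMM_i$ adjacent to $\FFF$. Both directions then reduce to relating $\TTT_1(\FFF)$ to $\TTT_2(\FFF)$ for suitable flags $\FFF$.

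For the forward direction, suppose $\MMM_1$ is a quotient of $\MMM_2$. Then $\CCC_1 \subseteq \CCC_2$ and every flat of $M_1$ is a flat of $M_2$, yielding $\Sigma_1 \subseteq \Sigma_2$. For any flag $\FFF$ of flats of $M_1$ (automatically a flag of $M_2$) and any $T \in \TTT_1(\FFF)$, I observe that $T \setminus F_i \in \CCC_1 \subseteq \CCC_2$ for every $F_i \in \FFF$. Taking $F_i = \emptyset$ gives $T \in \CCC_2$, and loopfreeness of $\MMM_2$ forces $T \in \{+,-\}^E$ to be a tope, hence $T \in \TTT_2(\FFF)$. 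This yields $\EEE_1(\sigma_\FFF) \subseteq \EEE_2(\sigma_\FFF)$, verifying the subfan condition on every cone of $\Sigma_1$.

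For the reverse direction, assume $(\Sigma_1, \EEE_1)$ is a real subfan of $(\Sigma_2, \EEE_2)$. From $\Sigma_1 \subseteq \Sigma_2$ all flats of $M_1$ are flats of $M_2$, so $M_1$ is a matroid quotient of $M_2$. To lift this to the oriented level, let $X \in \CCC_1$ and set $F := \supp(X)^c$, which is a flat of $M_1$ by \cite[Proposition 4.1.13]{bjorner} (and therefore of $M_2$). Choose any tope $T$ of $\MMM_1$ with $X \leq T$ in the covector order, for example $T = X \circ T_0$ for any tope $T_0$, so that $X = T \setminus F$. The technical heart is to extract a maximal flag $\FFF$ of $M_1$ that contains $F$ and is adjacent to $T$. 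This follows from \cite[Theorem 4.1.14]{bjorner}: the big face lattice of $\MMM_1$ is graded of length $d := \rk \MMM_1$ on $[0, T]$, so I can pick a maximal chain $0 = X_d < X_{d-1} < \cdots < X_0 = T$ in $\CCC_1$ passing through $X$, whose associated flats $F_i := \supp(X_i)^c$ form the required maximal flag. Then $\sigma_\FFF$ is a facet of $\Sigma_1$ and $\log T \in \EEE_1(\sigma_\FFF) \subseteq \EEE_2(\sigma_\FFF)$, so $T \in \TTT_2(\FFF)$, and in particular $X = T \setminus F \in \CCC_2$, as desired.

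The main obstacle is the combinatorial step in the reverse direction: parlaying subfan compatibility, which a priori only tells us about maximal flags of $M_1$, into full covector containment $\CCC_1 \subseteq \CCC_2$. The bridge is the fact that every covector of $\MMM_1$ has the form $T \setminus F$ with $F$ appearing in some maximal flag of $M_1$ adjacent to $T$, which is precisely what the gradedness of the covector lattice provides. Once this lemma is in hand, the rest of the argument is essentially bookkeeping.
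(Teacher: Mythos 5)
Your proof is correct, and the forward direction is essentially identical to the paper's. In the reverse direction you take a slightly heavier route: given a covector $X \in \CCC_1$ with flat $F = \supp(X)^c$ and a tope $T \geq X$, you invoke gradedness of the covector lattice (via \cite[Theorem 4.1.14]{bjorner}) to extend $X$ to a maximal chain in $[0,T]$ and thereby produce a \emph{maximal} flag $\FFF$ of $M_1$ through $F$, so that $\sigma_\FFF$ is a facet of $\Sigma_1$. The paper avoids this step entirely by exploiting the extended definition of $\EEE$ on arbitrary (non-maximal) cones together with Equation~(\ref{eq:topesmatroids}): it simply uses the short flag $\FFF = \{\emptyset \subsetneq F \subsetneq E\}$, notes that $T$ is adjacent to it since $T \setminus F = X \in \CCC_1$, and concludes $\varepsilon \in \EEE_1(\sigma_\FFF) \subseteq \EEE_2(\sigma_\FFF)$ directly. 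Note that in your version the facet $\sigma_\FFF$ of $\Sigma_1$ is still a non-maximal cone of $\Sigma_2$, so you already rely on the extended definition of $\EEE_2$ to read off $\TTT_2(\FFF)$; thus your argument uses the same ingredient, just shifted into the construction of the maximal flag, which turns out to be unnecessary scaffolding. One small stylistic remark: the paper deliberately avoids the discrete-logarithm notation, calling it a ``perversity''; writing $T = (-1)^\varepsilon$ and carrying $\varepsilon$ forward is cleaner than $\log T$.
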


\begin{proof}
  In general, given a real phase structure $\EEE_\MMM$ induced by an oriented matroid $\MMM$, 
  the inclusion of matroid fans $\Sigma_1 \subseteq \Sigma_2$ is equivalent to 
	$M_1$ being a (non-oriented) quotient of $M_2$, that is, the flats of $M_1$ are flats of $M_2$, see \cite{FR-DiagonalTropicalMatroid} or  \cite{ShawIntMat}. 
	In particular, if $\MMM_1$ is a quotient of $\MMM_2$, then $\Sigma_1 \subseteq \Sigma_2$. 
	Together with applying Equation (\ref{eq:topesmatroids}) to $\EEE_1(\tau)$ and $\EEE_2(\tau)$,
	the ``only if'' direction follows. 
	
	For the ``if'' direction, let $X$ be a covector of $\MMM_1$. Let $F = \phi(X)$ denote the underlying
	flat of $M_1$. Again, since $\Sigma_1 \subseteq \Sigma_2$, the flat $F$ is also a flat of $M_2$. 
	We consider the chain $\FFF = \{\emptyset \subsetneq F \subsetneq E\}$ and the corresponding ray
	$\sigma_\FFF$. Let $T$ be a tope of $\MMM_1$ such that $X \leq T$. It follows that $T = (-1)^\varepsilon$ with $\varepsilon \in \EEE_1(\sigma_\FFF)$,
	hence by assumption $\varepsilon \in \EEE_2(\sigma_\FFF)$. This, however, implies that $X = T\setminus F$ is a covector of $\MMM_2$, and we are done. 
\end{proof}

\begin{example}\label{ex:quotientU34}
For $M = U_{3, 4}$, the choice of a real phase structure on $\Sigma_M \subset \R^4$ is equivalent to the choice of $\varepsilon \in \Z_2^4$ such that $ \varepsilon , \varepsilon' \not \in \cup_{\sigma} \EEE(\sigma)$, where $\varepsilon' = \varepsilon + (1, \dots, 1)$.

The uniform matroid $M' = U_{2, 4}$ is an ordinary matroid quotient of $U_{3, 4}$.
Given a real phase structure $\EEE$ on $M$, there are $12$ real phase structures on $M'$ such that $\Sigma_{M'}$ 
equipped with one of these real phase structures produces a real subfan of $(\Sigma_M,\EEE)$. 
We now describe them. For every $i \in \{1, \dots, 4\}$ consider the chain of flats 
$$
\mathcal{F}_i :=\left\lbrace \emptyset \subset \left\lbrace i \right\rbrace \subset \left\lbrace 1,2,3,4 \right\rbrace \right\rbrace.
$$ 
The corresponding cone  $\sigma_i$ is in both $\Sigma_M$ and $\Sigma_{M'}$.  The set $\EEE(\sigma_i)$ consists of three affine spaces of dimension $3$ in $\Z_2^4$ which form a necklace arrangement. Moreover, the complement $\Z_2^4 \backslash \EEE(\sigma_i)$
is the unique affine space of dimension two parallel to $T_{\Z_2}(\sigma_i)$
containing the points $\varepsilon, \varepsilon'$.

Contained in $\EEE(\sigma_i)$ there are precisely $3$ affine spaces of dimension $2$ parallel to $T_{\Z_2}(\sigma_i)$. These three affine spaces arise as the intersections of the $3$-dimensional affine spaces in the necklace arrangement at $\sigma_i$.
It follows from this that if $\EEE'(0) \subset \EEE(0)$, then $(\Sigma_{M'}, \EEE') $ is a real subfan of $(\Sigma_M, \EEE)$. 
This is because if  $\EEE'(0) \subset \EEE(0)$ then  for each face $\sigma_i$ the affine space $\EEE'(\sigma_i)$ cannot be equal to the $2$-dimensional affine space $\Z_2^4 \backslash \EEE(\sigma_i)$, and hence $\EEE'(\sigma_i) \subset \EEE(\sigma_i)$.

Following the description from Example \ref{ex:line}, there are a total of $24$ real phase structures on $\Sigma_{M'}$ given by combining the choice of $3$ necklace orderings and a choice of the points in the intersection $\EEE'(\sigma_{i_1}) \cap \EEE'(\sigma_{i_2})$ where $i_1$ and $i_2$ are consecutive faces in the necklace ordering. Of the $24$ real phase structures there are exactly $12$ which contain  $\varepsilon, \varepsilon'$ and $12$ which do not contain them.  Therefore, for  $M' = U_{2, 4}$ there are a total of $12$ real subfans $(\Sigma_{M'}, \EEE')$ of  
$(\Sigma_{M}, \EEE)$ for a fixed real phase structure $\EEE$.

The uniform matroid $M'' = U_{1, 4}$ is also an ordinary matroid quotient of $U_{3, 4}$. The matroid fan of $M'' = U_{1, 4}$ is just a point which we denote by $0$. A real phase structure $\EEE'$ on $\Sigma_{M''}$ produces a real subfan of $( \Sigma_M, \EEE)$ if and only if $\EEE''(0) \neq \{\varepsilon, \varepsilon'\}$.

\comment{
\arthur{
$6$ pairs of elements $\{ \varepsilon_i,\varepsilon'_i \}$ where $\varepsilon_i' = \varepsilon_i + (1, \dots, 1)$. Moreover, these $6$ pairs are also aggregated in pairs $\left\lbrace \{ \varepsilon_i,\varepsilon'_i \}, \{ \varepsilon_j,\varepsilon'_j \}\right\rbrace $ such that 
$$
\{ \varepsilon_j,\varepsilon'_j \}=\{\varepsilon_i+\overline{v}_{1},\varepsilon'_i+\overline{v}_{1} \}.
$$ 
\kris{\bf maybe it is shorter to only say exactly how many two dimensional subspaces parallel to $\F$ are contained in $\EEE(\sigma_{\F})$? Is it three? I am a bit confused. } \arthur{\bf Yes it is three. You right maybe it is shorter, lets discuss it !}
The choice of such a pair $\{i,j\}$ determines exactly two chains of flats 
$$
\mathcal{F}_1:=\left\lbrace \emptyset \subset \left\lbrace 1 \right\rbrace \subset \{1, a_1\}\subset \left\lbrace 1,2,3,4 \right\rbrace \right\rbrace
$$  and 
$$
\mathcal{F}_2:=\left\lbrace \emptyset \subset \left\lbrace 1 \right\rbrace \subset \{1, a_2\}\subset \left\lbrace 1,2,3,4 \right\rbrace \right\rbrace
$$
 such that $\{ \varepsilon_i,\varepsilon'_i, \varepsilon_j,\varepsilon'_j \}$ is a subset of $\EEE(\sigma_{\mathcal{F}_1})$ and of $\EEE(\sigma_{\mathcal{F}_2})$. Now the chains of flats 
 $$
\mathcal{F}_{a_1}:=\left\lbrace \emptyset \subset \{ a_1 \} \subset \left\lbrace 1,2,3,4 \right\rbrace \right\rbrace
$$  and 
$$
\mathcal{F}_{a_2}:=\left\lbrace \emptyset \subset \{ a_2 \} \subset \left\lbrace 1,2,3,4 \right\rbrace \right\rbrace
$$
defines two new cones $\sigma_{a_1}$ and $\sigma_{a_2}$ of $\Sigma_{M''}$.
Declaring that $\{ \varepsilon_i,\varepsilon'_i \} \subset \EEE''(\sigma_{a_1})$ or $\{ \varepsilon_i,\varepsilon'_i \}\subset \EEE''(\sigma_{a_2})$ give two disctinct real phase structures on $\Sigma_{M''}$.}

\jojo{\bf I think i get different counts here, maybe we can compare: In this example it seems to be true that $\EEE'$ is a quotient of $\EEE$ if and only if $\EEE'(0) \subset \EEE(0)$, right?
There are 8 orientations on $U_{3,4}$ and 24 = 3*8 orientation on $U_{2,4}$ (necklace orderings times base point), right?
}\kris{\bf This isnt obvious to me. I really dont think it is true in general but I do not have a counter-example, (in relation to your question at the end) }

\jojo{
Among the 24, there 12 which avoid a fixed tope, and this is equivalent to being quotient of the orientation on $U_{2,4}$ that avoids the same point. so there should be 12. counted the other way, given 
an orientation on $U_2,4$ there are 4 points which are not topes, hence 4 orientations on $U_{3,4}$ which contain the given. In both cases, I get 12*8=24*4 pairs of quotients. 
}
\kris{\bf I don't think Arthur was trying to count pairs of quotients, but just how many real subfans there are of $U_{3,4}$ with underlying matroid $U_{2, 4}$.  } 
\jojo{\bf but then there are 12, right?}

\jojo{\bf In general, containment of topes $\EEE'(0) \subset \EEE(0)$ (instead of covectors) corresponds to weak maps, Björner 7.7.5. Can we give an example of $\EEE'(0) \subset \EEE(0)$ but $\EEE'$ is not a quotient of $\EEE$. Or is this always true if we additionally assume that the fans are contained/the underlying ordinary matroids are quotients? }}
\end{example}

Following our main theorem, the containment $\EEE'(0) \subset \EEE(0)$ corresponds to containment of topes of the corresponding oriented matroids.  This relation between oriented matroids corresponds to weak maps, \cite[Proposition 7.7.5]{bjorner}. 
The condition of being a real phase subfan is not always equivalent to having $\EEE'(0 ) \subset \EEE(0)$ as the next example shows. 

\begin{example}\label{ex:realsubfan}
Consider again the uniform matroid $ M = U_{3, 4}$. 
The rank $2$ matroid $N$ on $E = \{1, \dots, 4\}$ where $1, 2$ are parallel and $3, 4$ are parallel is also an ordinary matroid quotient of $U_{3, 4}$. The matroid fan of $N$ is an affine space of dimension $2$ in $\R^4$ and hence there are $4$ possible real phase structures on $\Sigma_N$. However, only one of these possible real phase structures produces a real subfan of $(\Sigma_M, \EEE)$. 
Indeed, if $\rho_1$ and $\rho_2$ denote the two half spaces which are top dimensional cones of $\Sigma_M$, then $\EEE(\rho_1)$ and $\EEE(\rho_2)$ are transversely intersecting affine subpaces of dimension $3$ in $\Z_2^4,$
and hence their intersection $\EEE(\rho_1) \cap \EEE(\rho_2)$ gives the unique real phase structure on $\Sigma_N$ yielding a real subfan. 
Yet three out of the four real phase structures on $\Sigma_{N}$ satisfy $\EEE'(0) \subset \EEE(0)$ and hence correspond to weak maps of oriented matroids. 
\end{example}

\subsection{The proof of Theorem \ref{thm:orientedphase}} \label{subsec:equiv}

In this section we prove Theorem \ref{thm:orientedphase}. The idea of the proof is a follows: We use a double induction argument 
on the rank and corank of $M$. 
The deletion and contraction of $M$ by a single element decreases by one the corank and the rank, respectively.  
Then the  contraction matroid is a corank $1$ matroid quotient of the deletion matroid. 
The crucial ingredient in the induction step on the oriented matroid side  is the positive 
answer to the \emph{factorization problem} in corank $1$. We start by recalling the related facts.

Let $\MMM_1$ be a quotient of $\MMM_2$. The factorization problem asks the question whether
there exists an oriented matroid $\MMM$ on a larger ground set $E' \supset E$ such that 
$\rk_\MMM(E^c) = \rk(M_2) - \rk(M_1)$, 
$\MMM_1 = \MMM/E^c$ and $\MMM_2 = \MMM \backslash E^c = \MMM|E$.

Interestingly, the general answer to this question is no, see \cite{RichterGebert93}.
For our purposes, however, it sufficient to consider $\rk(M_2) - \rk(M_1) = 1$,
in which case the answer is positive
\cite{RichterGebertZiegler94}. Here we present a slight generalization of the statement, which allows for  parallel elements. 

\begin{lemma} \label{lem:1ext}
  Let $E' = E \sqcup F$ be a finite set. Let $\MMM_1$ and $\MMM_2$ be oriented matroids on $E$
	such that $\MMM_1$ is a quotient of $\MMM_2$ and $\rk(\MMM_2) - \rk(\MMM_1) = 1$.
	Then there exists an oriented matroid $\MMM$ on $E'$ such that 
	$r_\MMM(F) = 1$, $F$ contains no loops of $\MMM$, $\MMM / F = \MMM_1$ and $\MMM \backslash F = \MMM_2$.
	Moreover, the oriented matroid $\MMM$ is unique up to reorientation of elements in $F$. 
\end{lemma}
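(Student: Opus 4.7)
The plan is to reduce the statement to the single-element case, which is the theorem of Richter-Gebert and Ziegler, and then produce the remaining elements of $F$ as parallel copies. The key observation is that the condition $\rk_\MMM(F) = 1$ together with the absence of loops in $F$ forces all elements of $F$ to be parallel or antiparallel in any putative extension $\MMM$, so nothing is lost by this reduction.

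For existence, pick one element $f_0 \in F$. Since $\MMM_1$ is a quotient of $\MMM_2$ with $\rk(\MMM_2) - \rk(\MMM_1) = 1$, the result of \cite{RichterGebertZiegler94} produces an oriented matroid $\MMM^*$ on $E \sqcup \{f_0\}$ with $\MMM^*/f_0 = \MMM_1$ and $\MMM^*\backslash f_0 = \MMM_2$. Note that $f_0$ is neither a loop (since contraction strictly drops the rank) nor a coloop (since deletion does not drop the rank) of $\MMM^*$. Now extend $\MMM^*$ to an oriented matroid $\MMM$ on $E' = E \sqcup F$ by declaring every $f \in F\setminus\{f_0\}$ to be parallel to $f_0$: concretely, the covectors of $\MMM$ are the vectors $X \in \{0,+,-\}^{E'}$ whose restriction $X|_{E\cup\{f_0\}}$ is a covector of $\MMM^*$ and whose value $X_f$ equals $X_{f_0}$ for each $f \in F\setminus\{f_0\}$. (Any reorientation of some elements of $F$ gives an equally valid extension, and these are precisely the different extensions we will obtain.) Standard covector manipulations then yield $\rk_\MMM(F) = 1$, no loops in $F$, $\MMM\backslash F = \MMM^*\backslash f_0 = \MMM_2$ and $\MMM / F = \MMM^*/f_0 = \MMM_1$; the last identity uses that after contracting $f_0$, every other element of $F$ becomes a loop and contraction and deletion of loops coincide.

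For uniqueness, let $\MMM$ be any oriented matroid on $E'$ satisfying the four conditions. Because $\rk_\MMM(F) = 1$ and $F$ contains no loops, any two elements $f, f' \in F$ are either parallel or antiparallel in $\MMM$. Reorienting $\MMM$ along an appropriate subset of $F\setminus\{f_0\}$, we may assume all $f \in F$ are parallel to a chosen $f_0 \in F$; this reduces the claim to showing that the restriction $\MMM|_{E\cup\{f_0\}}$ is uniquely determined. But $\MMM|_{E\cup\{f_0\}} = \MMM\backslash(F\setminus\{f_0\})$ is an oriented matroid on $E\sqcup\{f_0\}$ whose contraction by $f_0$ equals $\MMM_1$ and whose deletion of $f_0$ equals $\MMM_2$, so the corank-one uniqueness statement of \cite{RichterGebertZiegler94} determines it up to reorientation of $f_0$. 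Pulling this reorientation back through the parallel extension gives a reorientation of $\MMM$ along a subset of $F$, which completes the proof.

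The main technical step to watch is the verification that the parallel-copy construction behaves correctly under contraction of the whole set $F$; once one is comfortable with the standard fact that covectors of $\MMM/F$ are exactly those $\MMM$-covectors vanishing on $F$, together with the parallel-extension recipe for covectors, this is straightforward. No deeper obstacle appears, because the substantive content, namely the corank-one factorization of quotients, is entirely absorbed into the single-element statement of \cite{RichterGebertZiegler94}.
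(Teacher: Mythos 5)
Your existence argument is essentially the paper's: reduce to the single-element factorization theorem of Richter--Gebert and Ziegler and then account for the remaining elements of $F$ as parallel copies. (The paper iterates the factorization theorem once per element of $F$, using that $\MMM_1\oplus U_{0,1}$ is a quotient of the intermediate lifting, whereas you build the parallel extension explicitly on covectors; both work, and your verification that $\MMM/F=\MMM^*/f_0$ via ``loops after contracting $f_0$'' is correct.)

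The one genuine weak point is in the uniqueness part. Your reduction to the single-element case is fine: rank one and no loops force all elements of $F$ to be (anti)parallel, so after reorienting inside $F$ the matroid $\MMM$ is the parallel extension of $\MMM\backslash(F\setminus\{f_0\})$. But you then discharge the single-element uniqueness entirely to ``the corank-one uniqueness statement of \cite{RichterGebertZiegler94}''. The theorem invoked there (Theorem 4.1) is used in the paper only as an \emph{existence} result for the lifting; the paper does not cite it for uniqueness and instead proves uniqueness directly, by first noting that the underlying (non-oriented) matroid is unique and then comparing signed circuits according to how many elements of $F$ they contain: circuits disjoint from $F$ are oriented by $\MMM_2$, circuits meeting $F$ in one element restrict to signed circuits of $\MMM_1$ so that only the sign at that element is free, and circuits inside $F$ are two-element parallel circuits; the remaining freedom is exactly reorientation along a subset of $F$. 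If the cited theorem really is existence-only --- as the paper's own treatment strongly suggests --- then your proof is missing precisely this argument for the case $|F|=1$, which is the substantive half of the uniqueness claim. Supplying it (for instance by the circuit comparison just sketched, or by classifying single-element liftings with prescribed contraction via localizations) closes the gap.
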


\begin{proof}
  Let $e$ be an element of $F$. By \cite[Theorem 4.1]{RichterGebertZiegler94},
	there exists an oriented matroid $\MMM'_2$ on $E \sqcup \{e\}$ of rank $\rk(\MMM_2)$ such that
	$\MMM'_2/e = \MMM_1$ and $\MMM'_2\backslash e = \MMM_2$. 
	It is easy to check that $\MMM'_1 = \MMM_1 \oplus U_{0,1}$ 
	(that is, adding $e$ as a loop) is a quotient of $\MMM'_2$. 
	We can now apply the theorem again to $\MMM'_1$ and $\MMM'_2$
	in order to add another element $e'$ of $F$. 
	Repeating this procedure for the remaining elements in $F$, we obtain an oriented matroid
	$\MMM$ on $E'$ which satisfies the properties of the statement. 
	
	To prove uniqueness, we first note that the underlying matroid $M = \underline{\MMM}$
	is clearly unique by \cite{NeilWhiteTheoryOfMatroids}. To 
	check that the orientation is unique up to reorienting elements in $F$, 
	we consider the circuit description. Note that the number of elements in $F$ contained 
	in a (non-oriented) circuit $C$ of $M$ can be $0$, $1$ or $2$. 
	In the first case, $C$ is a circuit of $M\backslash F$ and hence its orientation is determined by 
	$\MMM_1$. In the second case, $C$ is a circuit in $M/F$ and hence its orientation is determined by $\MMM_1$. 
	Finally, in the last case, since $F$ is a flat of rank $1$, $C$ has to be a (two element) subset 
	of $F$. Clearly, two orientations of $M$ that only differ by the last type of circuits 
	can be obtained from each other by reorientation of $F$. 
	%
\end{proof}

\begin{lemma} \label{lem:RealStructureHiggs}
  Let $M$ be a matroid on $E$ and let $F$ be a flat of rank $1$. 
	Let $\EEE$ and $\EEE'$ be two real phase structures on $\Sigma_M$ such that
	$\EEE/F = \EEE'/F$ and $\EEE \backslash F = \EEE' \backslash F$. 
	Then $\EEE$ and $\EEE'$ agree up to reorientation by an element 
	$\varepsilon$ in the kernel of $p_F$. 
\end{lemma}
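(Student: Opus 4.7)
The plan is to produce an element $\varepsilon \in \Z_2^F = \ker(p_F)$ such that $\EEE' = \EEE + \varepsilon$, by combining a pointwise localization with a global compatibility argument. First, we verify that $p_F(\EEE(\sigma)) = p_F(\EEE'(\sigma))$ for every facet $\sigma$ of $\Sigma_M$. Since $F$ has rank one, $p_F$ either preserves the dimension of $\sigma$---in which case $p_F(\sigma)$ is a facet of $\Sigma_{M\backslash F}$---or drops it by exactly one, and an analysis of the chain of flats of $\sigma$ shows that in the latter case $p_F(\sigma)$ is a facet of $\Sigma_{M/F}$. In either case, the flexibility observed after Definition~\ref{InducedRealStructures}, namely that $\EEE\backslash F$ and $\EEE/F$ may be computed from any facet of $\Sigma_M$ projecting onto the prescribed facet, allows us to invoke $\EEE\backslash F = \EEE'\backslash F$ or $\EEE/F = \EEE'/F$ respectively. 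Consequently $\EEE'(\sigma) = \EEE(\sigma) + \varepsilon_\sigma$ with $\varepsilon_\sigma$ a well-defined coset in $\Z_2^F / (\Z_2^F \cap T_{\Z_2}(\sigma))$.

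Next we globalize. By connectedness of the facet adjacency graph of $\Sigma_M$, it suffices to establish the following local compatibility at each codimension one face $\tau$ with adjacent facets $\sigma_1, \dots, \sigma_k$: the cosets $\varepsilon_{\sigma_i}$ admit a common lift to $\Z_2^F / (\Z_2^F \cap T_{\Z_2}(\tau))$. Once this is verified, a common representative $\varepsilon \in \Z_2^F$ can be transported along any path in the graph to yield the desired global reorientation.

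The main obstacle is this local compatibility at $\tau$. Passing to the quotient $V = \Z_2^E / T_{\Z_2}(\tau)$ via the projection $\pi_\tau$, Lemma~\ref{lem:2'equiv} provides necklace arrangements of lines $L_i := \pi_\tau(\EEE(\sigma_i))$ and $L'_i := \pi_\tau(\EEE'(\sigma_i))$, related by translations $\bar\delta_i \in \pi_\tau(\Z_2^F)$ with $L'_i = L_i + \bar\delta_i$. We need to choose representatives $\delta_i \in \Z_2^F$ of $\varepsilon_{\sigma_i}$ that are all congruent modulo $\Z_2^F \cap T_{\Z_2}(\tau)$. We expect this combinatorial rigidity to follow from the interplay of the even-covering property characterizing necklaces, the linear relation $\sum_i \bar v_i = 0$ among the direction vectors derived in the proof of Lemma~\ref{lem:2'equiv}, and the constraint $\bar\delta_i \in \pi_\tau(\Z_2^F)$; roughly, any genuine reshuffling of the cyclic order between the $\EEE$- and $\EEE'$-necklaces would force some $\bar\delta_i$ outside $\pi_\tau(\Z_2^F)$ or violate the even-covering condition.
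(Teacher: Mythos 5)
Your skeleton is the same as the paper's — reduce to a rigidity statement at each codimension one face and propagate using connectedness of the fan in codimension one, after observing that $p_F(\EEE(\sigma))=p_F(\EEE'(\sigma))$ for every facet because $p_F(\sigma)$ is a facet of either $\Sigma_{M\backslash F}$ or $\Sigma_{M/F}$. But there is a genuine gap exactly where the content of the lemma lies: the local compatibility at $\tau$ is not proven, only announced (``we expect this combinatorial rigidity to follow from\dots''). What one actually has to show is a \emph{determination} statement: the necklace arrangement $\{\pi_\tau(\EEE(\sigma_i))\}$ around $\tau$ is completely determined by (a) its image under $p_F$, which is prescribed by $\EEE\backslash F$; (b) when one of the $\sigma_i$ is contracted by $p_F$, the datum from $\EEE/F$ of which intersection point of the projected necklace the contracted line sits over (equivalently, between which two consecutive non-contracted facets it is inserted); and (c) any single member $\EEE(\sigma_{i_0})$ of the arrangement. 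The determination then propagates around the necklace: consecutive lines meet in a point, that point must lie over the corresponding intersection point of the projected arrangement, and since a non-contracted line injects under $p_F$ this pins down the next line, and so on. Your heuristic that ``any genuine reshuffling of the cyclic order would force some $\bar\delta_i$ outside $\pi_\tau(\Z_2^F)$'' is not a proof and is not even aimed at the right phenomenon: the danger is not only a reshuffle of the cyclic order but an independent sliding of individual lines by distinct $\bar\delta_i\in\pi_\tau(\Z_2^F)$. The necklace condition only gives $\bar\delta_i-\bar\delta_{i+1}\in\langle\bar v_i,\bar v_{i+1}\rangle\cap\pi_\tau(\Z_2^F)$ for consecutive facets, and this intersection is nonzero precisely when a contracted facet is involved — which is where the information from $\EEE/F$ must be used and where your sketch says nothing.

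There is a second, smaller problem with the globalization. Knowing only that the cosets $\varepsilon_{\sigma_1},\dots,\varepsilon_{\sigma_k}$ around $\tau$ admit \emph{some} common lift does not permit transport along a path: if $\varepsilon\in\varepsilon_{\sigma_i}$, this $\varepsilon$ need not lie in the common coset of $\Z_2^F\cap T_{\Z_2}(\tau)$, since $\varepsilon_{\sigma_i}$ may split into several such cosets (this happens exactly when $\sigma_i$ is contracted). What you need is the stronger local statement that every $\varepsilon$ with $\EEE'(\sigma_i)=\EEE(\sigma_i)+\varepsilon$ for one adjacent \emph{non-contracted} facet satisfies the same identity for all facets adjacent to $\tau$; this is what the determination argument above delivers (normalize $\EEE'$ by one such $\varepsilon$ and propagate the equality $\EEE=\EEE'$ facet by facet through non-contracted facets, handling the contracted ones as a by-product at each step). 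Finally, the degenerate case where $F$ is a connected component of $M$ — so that \emph{every} facet is contracted and no non-contracted starting facet exists — needs separate treatment, e.g.\ via the product structure $\Sigma_M=\Sigma_{M|F}\times\Sigma_{M/F}$ as in the paper.
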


\begin{proof}
  Note that the affine matroid fan of $M|F \oplus M/F$
	is the cartesian product of the affine matroid fans of
	its summands, see \cite[Lemma 2.1]{FR-DiagonalTropicalMatroid}. 
	Since $M|F \cong U_{1, |F|}$, its matroid fan
	is $\R(1, \dots, 1) \subset R^F$. 
	Hence $\EEE / F$ determines the 
	real phase  structure on the matroid fan of $M|F \oplus M/F$ induced by $\EEE$,
	up to reorientation along the kernel of $p_F$. 
  In particular, if $F$ is a connected component of $M$, then $M = M|F \oplus M/F$ and 
	the claim follows. 
	
	
	Let us now assume that $F$ is not a connected component of $M$. We pick a fixed 
	facet $\overline{\sigma}$ of $\Sigma_{M \backslash F}$ and set 
	$\sigma := p_F^*(\overline{\sigma})$.  In particular, $\dim(p_F(\sigma)) = \dim(\sigma)$
	and we will refer to such facets as \emph{non-contracted} in the remainder of this proof. Facets not satisfying this are called contracted. 
		
	Since $p_F(\EEE(\sigma)) = \EEE \backslash F( \overline{\sigma})
	= \EEE' \backslash F(\overline{\sigma}) = p_F(\EEE'(\sigma))$, for a non-contracted face $\sigma$, there 
	exists an element $\varepsilon$ in the kernel of $p_F$ such that 
	$\EEE(\sigma) = \EEE'(\sigma) + \varepsilon$. 
	Hence after reorienting $\EEE'$ by $\varepsilon$ we may assume 
	$\EEE(\sigma) = \EEE'(\sigma)$. 
	Assume that a facet  $\sigma'$ is non-contracted. 
	Since $\Sigma_{M \setminus F}$ is connected in codimension one, 
	we can connect $\sigma$ and $\sigma'$ by a sequence of non-contracted facets
	such that successive pairs intersect in codimension one. 	
	Therefore, it is sufficient to show that given a codimension one face $\tau$ 
	of $\sigma$, the data of the  fixed affine space $\EEE(\sigma)$ together with the real phase structures $\EEE /F $  and $\EEE \backslash F$ determine $\EEE$ for all facets adjacent to $\tau$. Hence the real phase structure is unique up to reorientation. 
	By the first paragraph of the proof,  this also determines $\EEE(\sigma')$ for all
	contracted facets $\sigma' \in \Sigma_{M|F \oplus M/F}$. 
		
	Let us formulate the given data  in terms of the necklace arrangement around $\tau$. 
	Knowing $\EEE \backslash F$ is equivalent to knowing the projection of
	the necklace arrangement under $p_F$.
	 If one of the facets adjacent to $\tau$ is contracted, 
	this is equivalent to one of the affine spaces in the  necklace arrangement being contracted to a subspace of dimension one less under $p_F$. In this case, the image of the projection is the intersection of two of the affine subspaces in the corresponding necklace arrangement for $\EEE \backslash F$. The real phase structure $\EEE / F$ tells us  which intersection of affine spaces  in $\EEE \backslash F$ the contracted face maps to.
	This data together with the non-contracted affine space $\EEE(\sigma)$  of the necklace arrangement  uniquely determines the arrangement. 
	If none of the facets adjacent to $\tau$ are contracted, then knowing the projection of the necklace arrangement under $p_F$ and one of the affine space of the necklace arrangement completely determines the necklace arrangement.
	This finishes the proof. 
\end{proof}

\begin{example}\label{ex:realModification}
In this example, we illustrate the main step in the proof of Lemma \ref{lem:RealStructureHiggs} for the matroid 
$M = U_{3, 5}$ and $F = \{5\}$. 
Then $M \backslash 5 = U_{3,4}$ and $M / 5 = U_{2, 4}$. 
Consider the real phase structure $\EEE_{\backslash} $ on $\Sigma_{M \backslash 5}$ with $(0, \dots, 0), (1, \dots, 1) \not \in \EEE_{\backslash}(\sigma)$ for any face $\sigma$ of $\Sigma_{M \backslash 5}$. 

 Let $\tau $ be the codimension one face of $\Sigma_{M}$ corresponding to the chain of flats 
$$\mathcal{F}_1:=\left\lbrace \emptyset \subset \left\lbrace 1 \right\rbrace \subset  \left\lbrace 1,2,3,4, 5\right\rbrace \right\rbrace.$$ 
Let $\sigma_2$ be the facet of $\Sigma_{M}$ corresponding to the chain of flats
$$
\mathcal{F}_{12}:=\left\lbrace \emptyset \subset \left\lbrace 1 \right\rbrace \subset \{1, 2\}\subset \left\lbrace 1,2,3,4, 5\right\rbrace \right\rbrace.$$

Equip  $\Sigma_{M/ 5}$ with a real phase structure $\EEE_/$ having $\EEE_/ (p_5(\tau))  = (0,0, 0, 1) + \langle (1, 0,0,0), (1,1,1,1) \rangle$. 
Notice that then $\EEE_/(p_5(\tau))) \subset \EEE_{\backslash}(p_5(\tau))$. 
Hence $\EEE_/$ can be completed in such a way as to give rise to a  real subfans of $(\Sigma_{M \backslash 5}, E_{\backslash})$,
 see Example \ref{ex:realsubfan}. 
Suppose that  $\EEE$ is a real phase structure on $\Sigma_M$ such that $\EEE \backslash 5 = \EEE_{\backslash}$ and $\EEE / 5 = \EEE_/$.  
 Since we consider real phase structures on $\Sigma_{{M}}$ up to reorientation we are free to fix the affine space for a facet of this fan. Let us set $$\EEE(\sigma_2) = (0,0,0,1,0) + \langle(1, 0, 0, 0,0), (0,1,0,0,0), (1,1,1,1,1) \rangle.$$ 
There are three other facets adjacent to $\tau $ in $\Sigma_{{M}}$ namely $\sigma_3, \sigma_4,  \sigma_5$ where $\sigma_i$ corresponds to the chain of flats 
$$
\mathcal{F}_{1i}:=\left\lbrace \emptyset \subset \left\lbrace 1 \right\rbrace \subset \{1, i\}\subset \left\lbrace 1,2,3,4, 5\right\rbrace \right\rbrace.$$
The facet $\sigma_5$ is contracted under $p_5$, namely $p_5(\sigma_5) = p_5(\tau)$. 

  Since there are only three facets adjacent to a codimension one face of $\Sigma_{M \backslash 5}$ the necklace ordering of facets induced by $\EEE_{\backslash} $ at $p_5 (\tau)$ is unique. The above specification of $\EEE_{/} (p_5(\tau))  $ 
means that $\EEE_/ (p_5(\tau))   = \EEE_{\backslash } (p_5(\sigma_2) \cap \EEE_{\backslash} (p_5(\sigma_3))$. 
Then the necklace ordering induced on the facets in $\Sigma_M$ adjacent to $\tau$ induced by $\EEE$  is 
$\sigma_2, \sigma_5, \sigma_3, \sigma_4$.
The affine space ${\EEE}(\sigma_5)$  must be  the preimage of $\EEE_/(p_5(\tau))$ under the map $p_5$.
Since we have also  fixed $\EEE(\sigma_2)$ above, we see that  
$$\EEE(\sigma_5) \cap \EEE(\sigma_2) =
 (0,0,0,1,0) + 
\langle (1, 0, 0, 0,0) ,(1,1,1,1,1) \rangle.$$
 This intersection  together with the necklace ordering determines completely the necklace arrangement about $\tau$ and hence the other affine spaces $\EEE(\sigma_i)$. 
\end{example}

Lemma \ref{lem:RealStructureHiggs} and Lemma \ref{lem:subfans} tell us that deletion and contraction of real phase structures behave 
as expected in relation to oriented matroid quotients and real subfans in Proposition \ref{prop:quotientsubfan}.
Combining this with the existence of rank $1$ extensions proved in Lemma \ref{lem:1ext}, we are able to prove 
Theorem \ref{thm:orientedphase}.

\begin{proof}[of Theorem \ref{thm:orientedphase}]
  We need to show that any real phase structure $\EEE$ on a matroid fan $\Sigma_M$ can 
	be represented as $\EEE = \EEE_\MMM$ for some oriented matroid $\MMM$. 
	We proceed by double induction on rank and corank. 
	The base cases  for the induction are  $U_{0,n}$ and $U_{n,n}$ and they are  trivial.
	
	In the general case, let $F$ be an arbitrary flat of $M$ of rank $1$. 
	Without loss of generality we may assume that $M$ is loopfree. 
	By the induction assumption, the real phase structures $\EEE / F$ and $\EEE \backslash F$ are represented
	by oriented matroids, say $\MMM_1$ and $\MMM_2$, respectively. 
	By Lemma \ref{lem:subfans} and Proposition \ref{prop:quotientsubfan} we know that
	$\MMM_1$ is a quotient of $\MMM_2$. 
	If $F$ is a connected component of $M$, then the claim follows from the induction assumption applied
	to each connected component. 
	Otherwise, we have $\rk(\MMM_2) - \rk(\MMM_1) = 1$.  By Lemma \ref{lem:1ext}, there
	exists an oriented matroid $\MMM$ on $E$ such that $r_\MMM(F) = 1$, the flat $F$ contains no loops of $\MMM$, 
	$\MMM/F = \MMM_1$ and $\MMM \backslash F = \MMM_2$.
	Since $\underline{\MMM}/F = M/F$ and $\underline{\MMM} \backslash F = M \backslash F$, 
	the uniqueness of this extension for ordinary matroids implies that $\underline{\MMM} = M$,
	see \cite[Proposition 8.3.1]{NeilWhiteTheoryOfMatroids}. 
	It follows that $\EEE' = \EEE_\MMM$ and $\EEE$ are two real phase structures on $\Sigma_M$ whose
	deletion and contraction along $F$ agree. By Lemma \ref{lem:RealStructureHiggs},
	they agree up to reorientation along some $\varepsilon$. Since reorientations
	of oriented matroids and real phase structures are compatible, this shows that the 
	corresponding reorientation of $\MMM$ represents $\EEE$, which proves the claim. 
\end{proof}

\begin{example}\label{example:Fano}
By Theorem \ref{thm:orientedphase} if a matroid is non-orientable then there exists no real phase structure on the corresponding  matroid fan. It is known that the matroids of projective and affine spaces over finite fields are not orientable. 
Moreover, Ziegler constructed an infinite family of minimal non-orientable matroids of rank $3$ \cite{ZieglerNonOr}. Therefore, there are infinitely many matroid fans which do not admit a real phase structure. 
\end{example}

\comment{
We show that real phase structures $\EEE = \tilde{\EEE} \backslash 5$ on $\Sigma_M$ and $\EEE' =  \tilde{\EEE}/ 5$ on $\Sigma_{M'}$, uniquely 
Namely, we will describe the unique, up to reorientation, real phase structure on $U_{3,5}$   arising from $(\Sigma_M, \EEE)$ and the real phase subfan $(\Sigma_M', \EEE')$, where $\EEE'$ is one of the real phase structures on $M'$ from Example \ref{ex:quotientU34}. Namely, we have $\EEE'(0) \subset \EEE(0)$. 

We describe the real phase structure $\tilde{\EEE}$ on $\tilde{M}$.
Denote the projection between the matroid fans by $p_5 \colon \Sigma_{\tilde{M}} \to \Sigma_M$.  In the language of Lemma \ref{lem:RealStructureHiggs}, there are four top dimensional faces of $\Sigma_{\tilde{M}}$ which are contracted under $p_5$. If $\tilde{\sigma}$ is a contracted facets of $\Sigma_{\tilde{M}}$ we assign the unique $3$-dimensional affine space $p_5^{-1} ( \EEE(p_5(\tilde{\sigma})).$  Every other facet $\tilde{\sigma}$  of $\Sigma_{\tilde{M}}$ maps to facet of $\Sigma_M$ and in general there are two choices $\tilde{\EEE}(\tilde{\sigma})$ which project to $\EEE({\sigma})$
where $p_5(\tilde{\sigma}) = \sigma$.  However, there are only a total of two real phase structures $\tilde{\EEE}_1$ and $\tilde{\EEE}_2$ which project to $\EEE$ and $\EEE'$. 

\kris{
From Example \ref{ex:quotientU34} at a face $\sigma_{\F}$ of $\Sigma_M, \Sigma_{M'}$ which is codimension one in $\Sigma_M$ the affine subspace $\EEE'(\sigma_{\F})$  is equal to the  intersection of two of the affine spaces in the  necklace arrangement of $\EEE$ at $\sigma_{\F}$. The necklace ordering  on the $4$ facets of $\Sigma_{\tilde{M}}$ adjacent to $\sigma_{\F}$ is obtained from the necklace ordering on of the facets adjacent to $p_5(\sigma_{\F})$ with  the face killed by $p_5$ inserted into the order between the two affine spaces at which intersect. If  the necklace order around a codimension one face is fixed, as well as one of the affine spaces say $\EEE(\tilde{\sigma}_{i_1})$ in the necklace arrangement, it remains to make a choice of the two possibilities for the intersection   $\EEE(\tilde{\sigma}_{i_1}) \cap \EEE(\tilde{\sigma}_{i_2})$ to determine the rest of   the affine spaces in the necklace arrangement. }
\kris{\bf As far as I see it this is a real concern in the proof of Lemma 3.17} 
 \end{example}}

\begin{remark}
The previous discussion shows that  oriented matroid quotients (or real subfans of codimension $1$ of matroid fans) play a special role. 
In general,  if $(\Sigma_1, \EEE_1)$ is a real subfan of $(\Sigma_2, \EEE_2)$, 
we may ask whether this inclusion can be completed 
to a chain of real subfans whose dimensions increase by one in each step. 
Interestingly, there is a  counter-example of  Richter-Gebert  \cite[Corollary 3.4]{RichterGebert93} which shows that this is in general not the case. 
In the language of real phase structures, it gives rise to a pair of real matroid subfans $(\Sigma_1, \EEE_1) \subset (\Sigma_2, \EEE_2)$
with $\dim \Sigma_1 = 1$ and $\dim \Sigma_2 = 3$ such that there exists no real matroid fan
$(\Sigma, \EEE)$ such that $(\Sigma_1, \EEE_1) \subsetneq (\Sigma, \EEE) \subsetneq (\Sigma_2, \EEE_2)$.

This is in contrast to non-oriented matroids (equivalently, matroid fans without real phase structures), 
where the factorization problem can be answered affirmatively and hence such chains always exist \cite[Chapter 8.2]{NeilWhiteTheoryOfMatroids} 
\end{remark}

\subsection{From real phase structures to sign circuits} \label{RealStructureToSignature}
From our main theorem, a real phase structure on a matroid fan is equivalent to specifying an orientation on the underlying matroid and the topes of the oriented matroid are
the points in the real phase structure. Signed circuits are a cryptomorphic description of oriented matroids, and in 
this section, we will describe explicitly how to directly  construct the signed circuit vectors of the oriented matroid arising 
from real phase structure on a matroid fan $\Sigma_M$.

The signed circuits of an oriented matroid consists of a collection of sign vectors $X_C$ and $-X_C$ for every circuit $C $ of the underlying matroid such that $X_C$ has support $C$ and satisfying the signed circuit axioms \cite{bjorner}.
Describing the signed circuits of an oriented matroid $\mathcal{M}$  is  equivalent to choosing for each circuit $C$ of $M$ and  for each pair of elements $i,j \in C$ a sign $\gamma(\MMM)^C_{ij} \in \{\pm 1\}$, such that for all triples   $i,j,k \in C$ we have
\begin{equation}\label{eqn:signaturecheck}
  \gamma(\MMM)^C_{ij} \gamma(\MMM)^C_{jk} \gamma(\MMM)^C_{ik} = +1.
\end{equation}
The signed circuits $X_C$ and $-X_C$ can be recovered  from the assignments $\gamma(\MMM)^C_{ij}$. If $\gamma(\MMM)^C_{ij} = +1$, then $i, j$ have the same sign in $X_C$ and $-X_C$, whereas if $\gamma(\MMM)^C_{ij} =-1$, then $i, j$ have opposite signs in $X_C$ and $-X_C$.

\begin{example} \label{ex:SignaturesForOriented}
  Let $\MMM$ be the oriented matroid on $E = \{1, \dots, n\}$ associated to the (non-zero) linear forms $l_i \colon \R^d \to \R$, $i=1, \dots, n$, see Example \ref{RealHyperplanes}.
	Let $M$ be the underlying matroid. %
	A subset  $C\subset E$ is a circuit of $M$ if there exists a 
	linear relation 
		\[
	  \sum_{i \in C} a_i l_i = 0,
	\]
with  $a_i \neq 0$ for all $i$,  such that this is the unique relation among the linear forms $\{l_i\}_{i\in C}$, up to multiplying by a constant. 
	For any circuit $C$ of $M$ we can assign a signed vector $X_C$ to $C$ by setting $(X_C)_i$ equal to the sign of $a_i$ if $i \in C$ and
	$0$ otherwise.  Multiplying the relation by $-1$ produces $-X_C$. 

	Now, fix $i,j \in C$.
	Let $x$ be a generic point in $\bigcap_{k \in C \setminus \{i,j\}} \{l_k = 0\}$, and consider $X = (\text{sign}(l_s(x)))_s$ the covector of $\MMM$ associated to the cell containing $x$. 
	Note that 
	\[
	  0 = \sum_{k \in C} a_k l_k(x) = a_i l_i(x) + a_j l_j(x). 
	\]
	It follows that the sign of $a_i a_j$ is opposite to the sign of $l_i(x) l_j(x)$, or equivalently,	
	\begin{equation}\label{eq:gammaCircuits}
	\gamma(\MMM)^C_{ij} = (X_C)_i (X_C)_j = - X_i X_j. 
	\end{equation}
	Hence $\gamma(\MMM)^C_{ij}$ can be determined  by comparing entries in certain covectors, with an extra minus sign. Perturbing $x$ slightly to a generic point $x'$,
	we obtain a tope $T = (\text{sign}(l_s(x')))$. Since the values $l_k(x')$, for $k \in C \setminus \{i,j\}$ are arbitrarily small, we still have 
	\[
	  \gamma(\MMM)^C_{ij} = - T_i T_j
	\]
	for such a tope.
In order for a covector to be suitable for computing $\gamma^C_{ij}$ we require that 
	$\text{Supp}(X)^c = \cl(C \setminus \{i,j\})$. 
	Moreover, for a tope to be suitable for computing $\gamma(\MMM)^C_{ij}$ we must have $X < T$. So the suitable topes $T$ for computing the signs $\gamma(\MMM)^C_{ij}$ 
	are the ones that are adjacent to $\cl(C \setminus \{i,j\})$. 
\end{example}

  The previous example  extends directly  to  oriented matroids $\MMM$, as follows. 
	For an oriented matroid $\MMM$, we denote by $\gamma(\MMM)$
	the associated $\gamma$ description  of the signed circuits. 
	For any tope $T$ of $\MMM$ that is adjacent to $\cl(C \setminus \{i,j\})$ 
	we  have 
	\begin{equation} \label{eq:SignFromTopes} 
	  \gamma(\MMM)^C_{ij} = - T_i T_j.
	  	\end{equation} 
		
	\begin{example}
	 Consider the oriented matroid arising from  the complete graph on four vertices $\{1, 2, 3, 4\}$ equipped with the orientation from $i$ to $j$ on the edge $e_{ij}$ when $i < j$. Denote the ground set of $\mathcal{M}$ by $E = \{e_{ij} \}$ and  order the elements of $E$ lexicographically by their indices.  This oriented matroid also arises from the arrangement of real hyperplanes $H_{ij}$ defined by the linear forms $x_j - x_i$ for $1 \leq i < j \leq 4$. The affine fan of the underlying matroid lives in $\R^{6}$ and the projective fan is the cone over the Petersen graph, see \cite[Figure 2]{ArdilaKlivans}. 
	 
	Consider the maximal chain of flats $\FFF := \emptyset \subsetneq \{e_{12}\} \subsetneq \{e_{12},  e_{23}, e_{13}\} \subsetneq E$. We claim that the affine space $\EEE_{\MMM}(\sigma_{\FFF})$ of $\Z_2^6$ is 
	$$ \langle (1, 0, 0, 0, 0, 0),  (0, 1, 0, 1, 0, 0), (0, 0, 1, 0, 1, 1)\rangle. $$  The direction of the affine space is determined by $\FFF$ so it suffices to show that $(0, 0, 0, 0, 0, 0) \in 	\EEE_{\MMM}(\sigma_{\FFF})$.

	 A signed circuit of this graphical arrangement  is for example $C_{123} = (+, -, 0, +, 0, 0)$ coming from the circuit of the graph $\{e_{12},  e_{23}, -e_{13}\}$. 
By the definition of $\gamma(\MMM)$ we have 
	 $ \gamma(\MMM)^C_{e_{13}, e_{23 }} = -1$.	 Therefore, any tope adjacent to the above chain satisfies $T_{e_{13}} T_{e_{23 }} = +1$. 
	 To determine the relations of the other coordinates of a tope vector adjacent to $\FFF$ consider the signed circuits: $C_{124} = (+, 0, -, 0, +, 0)$ and 
$C_{134} = (0, +, -, 0, 0, +)$.  As above we obtain $T_{e_{14}} T_{e_{24 }} = +1$ and $T_{e_{14}} T_{e_{34 }} = +1$. Hence the tope $T = (+, +, +, +, +, +)$ is adjacent to $\FFF$. Therefore, $(0, 0, 0, 0, 0, 0) \in 	\EEE_{\MMM}(\sigma_{\FFF})$ and the claim is proven. 
	   \end{example}	
		
		The following proposition follows directly from the discussion above 
	and the fact that the points in the real phase structure correspond to topes of the oriented matroid associated by Theorem \ref{thm:orientedphase}.

\begin{proposition} \label{prop:realstructureToSignature}
	Let $M$ be matroid with real phase structure $\EEE$ on $\Sigma_M$. 
	The signed circuits with support $C$  of the orientation $\MMM_{\EEE}$ of $M$ arising  from $\EEE$  are described by 
	\[ \gamma(\MMM_{\EEE})^C_{ij}  = (-1)^{\varepsilon_i} (-1)^{\varepsilon_j} \]
	for all $i, j \in C$, where $\varepsilon \in \EEE(\sigma_{\F})$ for $\F$ any flag of flats containing the flat $cl(C\backslash ij)$. 
\end{proposition}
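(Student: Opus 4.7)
The plan is to derive this proposition as a direct consequence of Theorem \ref{thm:orientedphase} combined with formula (\ref{eq:SignFromTopes}) from the preceding discussion. The work has essentially already been done in Example \ref{ex:SignaturesForOriented} and its extension to arbitrary oriented matroids; what remains is to translate these statements from the language of topes and covectors into the language of real phase structures.

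First, I would invoke Theorem \ref{thm:orientedphase} (together with Definition \ref{def:OrientedToRealPhase}): the orientation $\MMM_\EEE$ has topes equal to $\{(-1)^\varepsilon : \varepsilon \in \bigcup_\sigma \EEE(\sigma)\}$, and more generally, for any flag of flats $\FFF$, the extended description of $\EEE$ on non-maximal cones gives $\EEE(\sigma_\FFF) = \{\varepsilon : (-1)^\varepsilon \in \TTT(\FFF)\}$ by formula (\ref{eq:topesmatroids}). Next, I would check that $\cl(C\setminus\{i,j\})$ is indeed a flat of rank $|C|-2 = \rk(C)-1$. This uses only the definition of a circuit: $C\setminus\{i,j\}$ is independent since circuits are minimally dependent, so $\rk(C\setminus\{i,j\}) = |C|-2$, and taking the closure preserves this rank. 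Consequently, $\cl(C\setminus\{i,j\})$ appears as a flat in some maximal flag $\FFF$, and the corresponding cone $\sigma_\FFF$ is a facet of $\Sigma_M$.

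Given any $\varepsilon \in \EEE(\sigma_\FFF)$, the tope $T := (-1)^\varepsilon$ is adjacent to every flat in $\FFF$, and in particular $T \setminus \cl(C\setminus\{i,j\}) \in \CCC(\MMM_\EEE)$. Thus $T$ is a tope adjacent to $\cl(C\setminus\{i,j\})$ in precisely the sense required to apply formula (\ref{eq:SignFromTopes}), yielding
\[
  \gamma(\MMM_\EEE)^C_{ij} \;=\; \pm T_i T_j \;=\; \pm (-1)^{\varepsilon_i}(-1)^{\varepsilon_j},
\]
independently of the choice of flag $\FFF$ through $\cl(C\setminus\{i,j\})$ and of the particular $\varepsilon \in \EEE(\sigma_\FFF)$.

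There is no real obstacle to this proof; the only subtlety is bookkeeping. In particular one should verify that $\cl(C\setminus\{i,j\})$ really is a proper flat of rank $\rk(M)-(\mathrm{corank\ needed})$ so that a maximal flag through it exists, and one should confirm that the set $\TTT(\FFF)$ is nonempty (guaranteed by Lemma \ref{lemma:numberoftopes}, which gives $|\TTT(\FFF)| = 2^d$). Once these are observed, the result is essentially a tautological translation of the description of signed circuits from topes adjacent to the corank-one flat $\cl(C\setminus\{i,j\})$ into the corresponding points of the real phase structure on any facet $\sigma_\FFF$ whose flag passes through that flat.
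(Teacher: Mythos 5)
Your proof follows exactly the paper's route: the paper offers no separate argument, stating only that the proposition ``follows directly from the discussion above'' — that is, from Equation (\ref{eq:SignFromTopes}) together with the identification of points of the real phase structure with topes via Theorem \ref{thm:orientedphase} and Equation (\ref{eq:topesmatroids}) — which is precisely the translation you carry out, including the check that $\cl(C\setminus\{i,j\})$ is a flat lying on a maximal flag. The one caveat is your hedged $\pm$: Equation (\ref{eq:SignFromTopes}) reads $\gamma(\MMM)^C_{ij}=-T_iT_j$ while the proposition's formula carries no minus sign, so to fully establish the statement as written you would need to pin down this sign — though this discrepancy is already present in the paper itself between Equation (\ref{eq:SignFromTopes}) and Proposition \ref{prop:realstructureToSignature}.
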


Following our extension of $\EEE$ to arbitrary faces of the matroid fan in Equation (\ref{eq:topesmatroids}) we do not require $\F$ to be a maximal flag of flats.  In particular, we can choose the flag of flats $\F = \{ \emptyset \subsetneq cl(C\backslash ij) \subsetneq E\}$ to determine the signs in the above proposition.

\bibliographystyle{alpha}
\bibliography{biblio}

\newcommand{\etalchar}[1]{$^{#1}$}
\begin{thebibliography}{BLVS{\etalchar{+}}99}

\bibitem[ADH20]{ArdilaDenhamHuh}
Federico Ardila, Graham Denham, and June Huh.
\newblock Lagrangian geometry of matroids.
\newblock 2020.
\newblock https://arxiv.org/abs/2004.13116.

\bibitem[AHK18]{AdiprasitoHuhKatz}
Karim Adiprasito, June Huh, and Eric Katz.
\newblock Hodge theory for combinatorial geometries.
\newblock {\em Ann. of Math. (2)}, 188(2):381--452, 2018.

\bibitem[AK06]{ArdilaKlivans}
Federico Ardila and Caroline~J. Klivans.
\newblock The {B}ergman complex of a matroid and phylogenetic trees.
\newblock {\em J. Combin. Theory Ser. B}, 96(1):38--49, 2006.

\bibitem[AKW06]{AKW}
Federico Ardila, Caroline Klivans, and Lauren Williams.
\newblock The positive {B}ergman complex of an oriented matroid.
\newblock {\em European J. Combin.}, 27(4):577--591, 2006.

\bibitem[BB18]{BakerHyperfields}
Matt Baker and Nathan Bowler.
\newblock Matroids over hyperfields.
\newblock {\em AIP Conference Proceedings}, 1978(1):340010, 2018.

\bibitem[BBR17]{BBR}
Beno\^it Bertrand, Erwan Brugall\'e, and Arthur Renaudineau.
\newblock Haas' theorem revisited.
\newblock {\em \'Epijournal Geom. Alg\'ebrique}, 1:Art. 9, 22, 2017.

\bibitem[Ber]{Bertrand}
Benoit Bertrand.
\newblock {\'Enum\'eration des courbes r\'eelles : \'etude de la maximalit\'e}.
\newblock In preparation.

\bibitem[BLVS{\etalchar{+}}99]{bjorner}
Anders Bj\"{o}rner, Michel Las~Vergnas, Bernd Sturmfels, Neil White, and
  Gunter~M. Ziegler.
\newblock {\em Oriented Matroids}.
\newblock Encyclopedia of Mathematics and its Applications. Cambridge
  University Press, 2 edition, 1999.

\bibitem[Cel19]{Celaya}
Marcel Celaya.
\newblock {\em Lattice points, oriented matroids, and zonotopes}.
\newblock PhD thesis, Georgia Tech, 2019.

\bibitem[CLY20]{CLY}
Marcel Celaya, Georg Loho, and Chi~Ho Yuen.
\newblock Patchworking oriented matroids.
\newblock {\em arXiv preprint https://arxiv.org/2010.12018}, 2020.

\bibitem[dS95]{topeAxioms}
Ilda P.~F. da~Silva.
\newblock Axioms for maximal vectors of an oriented matroid: a combinatorial
  characterization of the regions determined by an arrangement of
  pseudohyperplanes.
\newblock {\em European J. Combin.}, 16(2):125--145, 1995.

\bibitem[FL78]{FolkmanLawrence}
Jon Folkman and Jim Lawrence.
\newblock Oriented matroids.
\newblock {\em J. Combin. Theory Ser. B}, 25(2):199--236, 1978.

\bibitem[FR13]{FR-DiagonalTropicalMatroid}
Georges {Francois} and Johannes {Rau}.
\newblock The diagonal of tropical matroid varieties and cycle intersections.
\newblock {\em Collect. Math.}, 64(2):185--210, 2013.

\bibitem[Mik06]{GrishaICM}
Grigory Mikhalkin.
\newblock Tropical geometry and its applications.
\newblock In {\em International {C}ongress of {M}athematicians. {V}ol. {II}},
  pages 827--852. Eur. Math. Soc., Z\"{u}rich, 2006.

\bibitem[MR]{MikRau}
Grigory Mikhalkin and Johannes Rau.
\newblock Tropical geometry.
\newblock \url{https://math.uniandes.edu.co/~j.rau}.

\bibitem[Rau20]{Rau-TropicalLefschetzHopf}
Johannes Rau.
\newblock On the tropical {L}efschetz-{H}opf trace formula.
\newblock {\em ArXiv e-prints}, 2020.

\bibitem[Ren17]{Renaudineau17}
Arthur Renaudineau.
\newblock A tropical construction of a family of real reducible curves.
\newblock {\em Journal of Symbolic Computation}, 80:251--272, 2017.

\bibitem[{Ric}93]{RichterGebert93}
J\"urgen {Richter-Gebert}.
\newblock {Oriented matroids with few mutations}.
\newblock {\em {Discrete Comput. Geom.}}, 10(3):251--269, 1993.

\bibitem[RS18]{RS}
Arthur Renaudineau and Kristin Shaw.
\newblock Bounding the betti numbers of real hypersurfaces near the tropical
  limit.
\newblock {\em https://arxiv.org/abs/1805.02030}, 2018.

\bibitem[RZ94]{RichterGebertZiegler94}
J\"urgen {Richter-Gebert} and G\"unter~M. {Ziegler}.
\newblock {Zonotopal tilings and the Bohne-Dress theorem}.
\newblock In {\em {Jerusalem combinatorics '93: an international conference in
  combinatorics, May 9-17, 1993, Jerusalem, Israel}}, pages 211--232.
  Providence, RI: American Mathematical Society, 1994.

\bibitem[Sha13]{ShawIntMat}
Kristin Shaw.
\newblock A tropical intersection product in matroidal fans.
\newblock {\em SIAM J. Discrete Math.}, 27(1):459--491, 2013.

\bibitem[Vir84]{Viro6and7}
O.~Ya. Viro.
\newblock Gluing of plane real algebraic curves and constructions of curves of
  degrees {$6$} and {$7$}.
\newblock In {\em Topology ({L}eningrad, 1982)}, volume 1060 of {\em Lecture
  Notes in Math.}, pages 187--200. Springer, Berlin, 1984.

\bibitem[Vir08]{ViroPatch}
Oleg Viro.
\newblock From the sixteenth {H}ilbert problem to tropical geometry.
\newblock {\em Jpn. J. Math.}, 3(2):185--214, 2008.

\bibitem[Whi86]{NeilWhiteTheoryOfMatroids}
Neil White, editor.
\newblock {\em Theory of matroids}, volume~26 of {\em Encyclopedia of
  Mathematics and its Applications}.
\newblock Cambridge University Press, Cambridge, 1986.

\bibitem[Zas75]{Zaslavsky}
Thomas Zaslavsky.
\newblock Facing up to arrangements: face-count formulas for partitions of
  space by hyperplanes.
\newblock {\em Mem. Amer. Math. Soc.}, 1(issue 1, 154):vii+102, 1975.

\bibitem[Zie91]{ZieglerNonOr}
G\"{u}nter~M. Ziegler.
\newblock Some minimal nonorientable matroids of rank three.
\newblock {\em Geom. Dedicata}, 38(3):365--371, 1991.

\end{thebibliography}
\affiliationone{
   Johannes Rau\\
   Universidad de los Andes,\\ Carrera 1 no.~18A-12,\\ Bogotá, Colombia.
   \email{j.rau@uniandes.edu.co}}
\affiliationtwo{
   Arthur Renaudineau\\
   Univ. Lille, CNRS, UMR 8524 -\\
Laboratoire Paul Painlev\'e\\ F-59000 Lille\\ France.
   \email{arthur.renaudineau@univ-lille.fr}}
\affiliationthree{Kris Shaw\\
Department of Mathematics \\
University of Oslo\\
 Blindern, 0316\\
 Oslo, Norway
 \email{krisshaw@math.uio.no}} 

\end{document}